\newtheorem{thm}{Theorem}[section]
\newtheorem{cor}{Corollary}[section]
\newtheorem{lem}{Lemma}[section]
\newtheorem{obs}{Observation}[section]
\newtheorem{pro}{Proposition}[section]
\newcommand{\mpo}{\mbox{mp}}
\title{\textbf{Matching preclusion number of graphs}
    \footnote{Supported by the National Science Foundation of China
        (Nos. 11601254, 11551001, 11161037, and 11461054) and the Science
        Found of Qinghai Province (Nos.  2016-ZJ-948Q, and 2014-ZJ-907).}}
\author{
\small Zhao Wang$^{1}$, \ \ Yaping Mao$^{2,5}$\footnote{Corresponding author}, \ \  Eddie Cheng$^{3}$, \ \ Jinyu Zou$^{4}$\\[0.1cm]
\small $^1$College of Science, China Jiliang\\
\small University, Hangzhou 310018, China\\[0.1cm]
\small $^2$School of Mathematics and Statistics, Qinghai\\
\small Normal University, Xining, Qinghai 810008, China\\[0.1cm]
\small $^3$Department of Mathematics and Statistics,\\
\small Oakland University, Rochester, MI USA 48309\\[0.1cm]
\small $^4$School of Computer Sciences, Qinghai\\
\small Normal University, Qinghai 810008, China\\[0.1cm]
\small $^5$Center for Mathematics and Interdisciplinary Sciences\\
\small  of Qinghai Province, Xining, Qinghai 810008, China\\[0.1cm]
\small E-mails: wangzhao@mail.bnu.edu.cn; maoyaping@ymail.com; \\
\small ~~~~  echeng@oakland.edu; zjydjy2015@126.com}
\date{}
\begin{document}

\maketitle
\begin{abstract}
The \emph{matching preclusion number}
of a graph $G$, denoted by $\mpo(G)$, is the minimum number of edges whose deletion
results in a graph that has neither perfect matchings nor
almost-perfect matchings. In this paper, we first give some sharp
upper and lower bounds of matching preclusion number. Next, graphs with large and
small matching preclusion number are characterized, respectively. In the end,
we investigate some extremal problems and the Nordhaus-Gaddum-type relations on matching preclusion number. \\[2mm]
{\bf Keywords:} Interconnection networks; perfect matching;
matching preclusion number; Nordhaus-Gaddum problem; extremal problem\\[2mm]
{\bf AMS subject classification 2010:} 05C38; 05C70; 05C75; 05C76; 68M15.
\end{abstract}

\section{Introduction}
In a parallel machine, the underlying topology is a graph where vertices are processors and edges are physical links between processors. Such a graph in an interconnection network. One may want to cluster the processors
into groups and the most basic clusters are clusters of two processors. Such a clustering may or may not be possible if there are failures in some of the links. So it may be desirable to consider interconnection networks that are
resilient to faulty links under such a clustering requirement.
 
All graphs considered in this paper are undirected, finite and
simple. We refer to the book \cite{Bondy} for graph theoretical
notation and terminology not described here. For a graph $G$, let $V(G)$, $E(G)$,
and $\overline{G}$ denote the set of vertices, the set
of edges, and the complement of $G$, respectively. The number of vertices in $G$ is the \emph{order} of $G$.
For any subset $X$ of $V(G)$, let $G[X]$ denote the subgraph induced
by $X$; similarly, for any subset
$F$ of $E(G)$, let $G[F]$ denote the subgraph induced by $F$. Let $X\subseteq V(G)\cup E(G)$. We use
$G-X$ to denote the subgraph of $G$ obtained by removing all the
vertices in $X$ together with the edges incident with them from $G$ as well as
removing all the edges in $X$ from $G$. If $X=\{x\}$, we may write $G-x$ instead of
$G=\{x\}$.
For two subsets $X$ and $Y$ of
$V(G)$ we denote by $E_G[X,Y]$ the set of edges of $G$ with one end
in $X$ and the other end in $Y$.

The {\it degree}\index{degree} of a vertex $v$ in a graph $G$,
denoted by $deg_G(v)$, is the number of edges of $G$ incident with
$v$. Let $\delta(G)$ and $\Delta(G)$ be the minimum degree and
maximum degree of the vertices of $G$, respectively. The set of
neighbors of a vertex $v$ in a graph $G$ is denoted by $N_G(v)$.
A graph is
\emph{Hamiltonian} if it contains a Hamiltonian cycle. A component of a
graph is \emph{odd} or \emph{even} according to whether it has an
odd or even number of vertices.

A \emph{matching} in a graph is a set of edges such that every vertex is incident with at most one edge in this set. If a set of edges form a matching in a graph, they are \emph{independent}.
A \emph{perfect matching} in a graph is a set of edges such that every vertex is incident with exactly one edge in this set. An
\emph{almost-perfect matching} in a graph is a set of edges such that every vertex, except one, is incident with exactly one edge in
this set, and the exceptional vertex is incident to none. So if a graph has a perfect matching, then it has an even number of
vertices; if a graph has an almost-perfect matching, then it has an odd number of vertices. The \emph{matching preclusion number}
of a graph $G$, denoted by $\mpo(G)$, is the minimum number of edges whose deletion leaves the resulting graph with neither
perfect matchings nor almost-perfect matchings. Such an optimal set is called an \emph{optimal matching preclusion set}. We define
$\mpo(G)=0$ if $G$ has neither perfect matchings nor almost-perfect matchings. This concept of matching preclusion was
introduced in \cite{BrighamHVY} and further studied in \cite{BrighamHVY, ChengLJ, ChengLJa, ChengCM, ChengL, ChengL2, ChengHJL, ChengLLL, Park, ParkS, ParkI, ParkI2, WangWLL, MWCM, WangFZ}. Originally this concept was introduced as a measure of robustness in the event
of edge failure in interconnection networks. It is worth nothing that besides this application, it was also remarked in \cite{BrighamHVY} that this measure has a theoretical connection to other concepts in graph theory such as conditional connectivity and extremal graph theory.

The following results are immediate.
\begin{obs}{\upshape \cite{ParkI}}\label{obs1-1}
$(1)$ If $H$ is a spanning subgraph of $G$, then
$\mpo(H)\leq \mpo(G)$.

$(2)$ For an even graph $G$, $\mpo(G)\leq \delta(G)$.

$(3)$ If $e$ is an edge of $G$, then $\mpo(G-e)\geq \mpo(G)-1$.
\end{obs}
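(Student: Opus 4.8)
All three statements come down to elementary manipulations of edge sets, so the plan is to treat them one at a time, in each case either exhibiting a small matching preclusion set directly or transporting an optimal one between $G$ and the modified graph. Throughout I would keep one subtle point in view: whether the relevant object is a perfect matching or an almost-perfect matching is dictated solely by the parity of the number of vertices, and this parity is unchanged both when passing to a spanning subgraph and when deleting an edge; moreover, a matching of a graph $H'$ on vertex set $V$ is automatically a matching of every graph on $V$ that contains $H'$ as a subgraph. These two facts are what make the arguments go through.

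For part $(1)$, I would begin with an optimal matching preclusion set $F\subseteq E(G)$ of $G$, so that $|F|=\mpo(G)$ and $G-F$ has neither a perfect nor an almost-perfect matching (taking $F=\emptyset$ when $\mpo(G)=0$). Since $H$ is spanning, $V(H)=V(G)$ and $E(H)\subseteq E(G)$; I would then put $F'=F\cap E(H)$ and note that $H-F'$ has vertex set $V(G)$ and edge set $E(H)\setminus F\subseteq E(G)\setminus F$, so it is a spanning subgraph of $G-F$. Hence any perfect (respectively, almost-perfect) matching of $H-F'$ would also be one of $G-F$, which is impossible; thus $F'$ is a matching preclusion set of $H$, and $\mpo(H)\le|F'|\le|F|=\mpo(G)$.

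For part $(2)$, I would pick a vertex $v$ with $\deg_G(v)=\delta(G)$ and take $F=E_G[\{v\},V(G)\setminus\{v\}]$, the set of all $\delta(G)$ edges incident with $v$. In $G-F$ the vertex $v$ is isolated, so $G-F$ has no perfect matching; and since $G$, and therefore $G-F$, has an even number of vertices, $G-F$ has no almost-perfect matching either. So $F$ is a matching preclusion set and $\mpo(G)\le|F|=\delta(G)$; the degenerate case $\delta(G)=0$ is handled by the same argument with $F=\emptyset$. For part $(3)$, I would start from an optimal matching preclusion set $F'$ of $G-e$ and observe that $G-(F'\cup\{e\})=(G-e)-F'$ has neither a perfect nor an almost-perfect matching, so $F'\cup\{e\}$ is a matching preclusion set of $G$; therefore $\mpo(G)\le|F'\cup\{e\}|\le\mpo(G-e)+1$, which rearranges to $\mpo(G-e)\ge\mpo(G)-1$.

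As for the main obstacle: there is no serious one. The entire content lies in careful bookkeeping — checking that each inequality remains valid under the convention $\mpo(\cdot)=0$, being explicit that matchings lift from a subgraph to any supergraph on the same vertex set, and observing that order-parity is preserved by both operations involved. Once these points are pinned down, each of the three arguments is only a line or two long.
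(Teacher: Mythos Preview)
Your proof is correct in all three parts. The paper itself does not prove this observation; it is stated as immediate and attributed to \cite{ParkI}, so there is no in-paper argument to compare against, but your line-by-line verification (transporting an optimal preclusion set from $G$ to $H$ via intersection, isolating a minimum-degree vertex, and augmenting an optimal set of $G-e$ by $e$) is exactly the kind of routine check the authors had in mind.
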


As mentioned earlier, the topologies of the underlying architecture of distributed processor systems or parallel machines are important since good topologies offers the advantages of improved connectivity and reliability. We refer the readers to \cite{HsuL} for recent progress
in this area and the references in its extensive bibliography. For application that requires clustering, the most clusters are of size two. This is exactly the concepts of perfect matchings. Moreover,
the matching preclusion number measures the robustness under this clustering requirement in the event of link
failures, as indicated in \cite{BrighamHVY}. Naturally, we want the matching preclusion number to be high.

Let $\mathcal {G}(n)$ denote the class of simple graphs of order $n$. Given a graph theoretic parameter $f(G)$ and a positive
integer $n$, the \emph{Nordhaus-Gaddum Problem} is to
determine sharp bounds for: $(1)$ $f(G)+f(\overline{G})$ and $(2)$
$f(G)\cdot f(\overline{G})$, as $G$ ranges over the class $\mathcal
{G}(n)$, and characterize the extremal graphs. We consider the following problems in which their solutions will give insights in designing interconnection networks with respect to the size of the networks and the targeted matching preclusion number.

\noindent {\bf Problem 1.} Given two positive integers $n$ and $k$,
compute the minimum integer $s(n,k)=\min\{|E(G)|:G\in
\mathscr{G}(n,k)\}$, where $\mathscr{G}(n,k)$ the set of all graphs
of order $n$ (that is, with $n$ vertices) with matching preclusion number $k$.


\noindent {\bf Problem 2.} Given two positive integers $n$ and $k$,
compute the minimum integer $f(n,k)$ such that for every connected
graph $G$ of order $n$, if $|E(G)|\geq f(n,k)$ then $\mpo(G)\geq k$.

\noindent {\bf Problem 3.} Given two positive integers $n$ and $k$,
compute the maximum integer $g(n,k)$ such that for every
graph $G$ of order $n$, if $|E(G)|\leq g(n,k)$ then $\mpo(G)\leq k$.

We remark that while Problem 1 and Problem 3 are over all graphs, Problem 2 is over all connected graphs. Problem 2 would not be a good problem if it is over all graphs as we want to find the smallest $f(n,k)$ to guarantee the matching
preclusion number of a graph on $n$ vertices and $f(n,k)$ edges is at least $k$. Since the graph with two singletons and a $K_{n-2}$ has matching preclusion number 0, the bound on the number of edges would be too big, so it is not a good question to consider.

In Section 2, we give sharp upper bounds of $\mpo(G)$ for a graph $G$ and show that
$$
0\leq \mpo(G)\leq \begin{cases}
n-1 & \text{if $n$ is even,}\\
2n-3 & \text{if $n$ is odd.}
\end{cases}
$$
for a graph $G$ of order $n$. In Section 3, graphs with $\mpo(G)=0,1$, even graphs with $\mpo(G)=n-k \ (n\geq 4k+6)$, and odd graphs with $\mpo(G)=2n-3,2n-4,2n-5$ are characterized, respectively. In Section 4, we study the above extremal problems on matching preclusion number.
The Nordhaus-Gaddum-type results on matching preclusion number are given in Section 5. Many researchers found Nordhaus-Gaddum-type results aid their understanding of network vulnerability parameters such as edge-toughness, scattering number and bandwidth.
Indeed this type of results are of interest to both graph theorists and network analysts. See \cite{Aouchiche} for a survey.

\section{Sharp bounds on matching preclusion number}

From complete graphs, Brigham et al. \cite{BrighamHVY} derived the following
result.
\begin{thm}{\upshape \cite{BrighamHVY}}\label{lem2-1}
For a complete graph $K_n$,
$$
\mpo(K_n)=\begin{cases}
n-1 & \text{if $n$ is even and $n\geq 2$,}\\
2n-3 & \text{if $n$ is odd and $n\geq 9$.}
\end{cases}
$$
\end{thm}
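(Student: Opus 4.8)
The plan is to split on the parity of $n$ and, in each case, prove a matching upper and lower bound. \emph{Even $n\ge 2$:} the bound $\mpo(K_n)\le n-1$ is immediate from Observation~\ref{obs1-1}(2), as $K_n$ is an even graph with $\delta(K_n)=n-1$ (equivalently, deleting the $n-1$ edges at one vertex destroys every perfect matching, and there are no almost-perfect matchings on an even vertex set). For the reverse inequality I would invoke the classical $1$-factorization of $K_n$ for even $n$: $E(K_n)$ is a disjoint union of $n-1$ perfect matchings $M_1,\dots,M_{n-1}$, so any $F\subseteq E(K_n)$ with $|F|\le n-2$ misses some $M_i$ entirely and $M_i$ survives in $K_n-F$; hence $\mpo(K_n)\ge n-1$, giving equality.

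For odd $n$, the upper bound $\mpo(K_n)\le 2n-3$ (valid for all odd $n\ge 3$) comes from deleting the $2(n-1)-1=2n-3$ edges incident with two fixed vertices $u,v$: this isolates $u$ and $v$, so every matching misses at least two vertices while an almost-perfect matching misses only one. For the lower bound, let $F\subseteq E(K_n)$ with $|F|\le 2n-4$, put $G=K_n-F$, and write $o(H)$ for the number of odd components of $H$; the task is to exhibit an almost-perfect matching of $G$. By the Berge--Tutte formula a maximum matching of $G$ has $\tfrac12(n-d)$ edges, where $d=\max_{S\subseteq V(G)}\bigl(o(G-S)-|S|\bigr)$, and since $n$ is odd $d$ is a positive odd integer; so it suffices to rule out $d\ge 3$, i.e.\ to show that no vertex set $S$, say with $|S|=s$, satisfies $o(G-S)\ge s+3$.

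Suppose such an $S$ existed. Then $G-S$ has at least $q:=s+3$ components on its $n-s$ vertices, so the number of vertex-pairs of $G-S$ lying in distinct components is at least $(q-1)(n-s-q+1)+\binom{q-1}{2}$ — the minimum, attained by one large component together with $q-1$ singletons — and each such pair is a non-edge of $G$, hence an element of $F$. Writing $q=s+3$ and simplifying turns this into $2n-4\ge|F|\ge 2n-4+\tfrac12 h(s)$ where $h(s)=s(2n-3s-9)+2$, so $h(s)\le 0$; but a feasible $S$ must have $q\le n-s$, i.e.\ $s\le(n-3)/2$, which for $n\ge 9$ forces $3s\le\tfrac12(3n-9)\le 2n-9$, hence $2n-3s-9\ge 0$ and $h(s)\ge 2>0$, a contradiction. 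Thus $d=1$, $G$ has an almost-perfect matching, and $\mpo(K_n)\ge 2n-3$. The routine parts are the two upper bounds and the $1$-factorization; the main obstacle is exactly this last counting step — verifying $(q-1)(n-s-q+1)+\binom{q-1}{2}>2n-4$ over all feasible $(s,q=s+3)$ — and it is precisely here that $n\ge 9$ is needed, the inequality failing for small odd $n$ (e.g.\ $n=7$, $s=2$).
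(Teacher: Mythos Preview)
The paper does not prove this theorem; it is quoted from \cite{BrighamHVY} without proof. Your argument is correct and self-contained: the even case via the $1$-factorization of $K_n$ is standard, and for odd $n$ your Berge--Tutte deficiency count is sound. In particular, the step ``at least $q=s+3$ odd components implies at least $q$ components, hence at least $(q-1)(n-s-q+1)+\binom{q-1}{2}$ cross-component non-edges'' is valid because that quantity is non-decreasing in the number of components, and your algebra $h(s)=s(2n-3s-9)+2$ checks out; the bound $s\le (n-3)/2$ together with $n\ge 9$ gives $2n-3s-9\ge 0$ and hence $h(s)\ge 2>0$, exactly as you say. Your remark that the argument breaks at $n=7$, $s=2$ is also correct (and indeed $\mpo(K_7)<2\cdot 7-3$, since deleting the $\binom{5}{2}=10$ edges of a $K_5$ inside $K_7$ leaves a graph with deficiency $3$).
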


The following result, due to Dirac, is well-known.
\begin{thm}{\upshape Dirac \cite{Bondy} \ (p-485)}\label{thA}
Let $G$ be a simple graph of order $n \ (n\geq 3)$ and minimum
degree $\delta$. If $\delta\geq \frac{n}{2}$, then $G$ is
Hamiltonian.
\end{thm}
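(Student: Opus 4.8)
Although Theorem~\ref{thA} is classical, the natural plan is the standard longest-path argument, which I would carry out as follows. Suppose, for contradiction, that $G$ is not Hamiltonian, and let $P=v_1v_2\cdots v_k$ be a path of maximum length in $G$. By the maximality of $P$, neither endpoint can have a neighbour off the path, so $N_G(v_1)\subseteq\{v_2,\dots,v_k\}$ and $N_G(v_k)\subseteq\{v_1,\dots,v_{k-1}\}$; in particular $\deg_G(v_1)\le k-1$, hence $k\ge\delta+1$, and trivially $k\le n$ since a path has distinct vertices.

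The first key step is to produce a cycle through all of $V(P)$. Set $S=\{\,i:1\le i\le k-1,\ v_1v_{i+1}\in E(G)\,\}$ and $T=\{\,i:1\le i\le k-1,\ v_iv_k\in E(G)\,\}$, so that $|S|=\deg_G(v_1)$ and $|T|=\deg_G(v_k)$. Using $\delta\ge n/2$ gives $|S|+|T|\ge 2\cdot\frac{n}{2}=n>k-1$, so $S$ and $T$, both subsets of $\{1,\dots,k-1\}$, must intersect. Choosing $i\in S\cap T$, the edges $v_1v_{i+1}$ and $v_iv_k$ convert $P$ into the cycle $C=v_1v_2\cdots v_iv_kv_{k-1}\cdots v_{i+1}v_1$ of length $k$.

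The second key step is to show that $C$ is spanning. First note that $\delta\ge n/2$ forces $G$ to be connected, since every component has at least $\delta+1>n/2$ vertices and two components would already require more than $n$ vertices. If $V(C)\ne V(G)$, connectedness yields an edge $wv_j$ with $w\notin V(C)$ and $v_j\in V(C)$; traversing $C$ around from a neighbour of $v_j$ to $v_j$ and then prepending $w$ gives a path on $k+1$ vertices, contradicting the maximality of $P$. Hence $k=n$ and $C$ is a Hamiltonian cycle, completing the proof.

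The only genuinely non-routine point is the counting step that forces $S\cap T\ne\emptyset$; the remaining ingredients—non-extendability of a longest path, connectedness from the minimum-degree hypothesis, and splicing a pendant edge onto a spanning cycle to lengthen a path—are all routine bookkeeping. The single place where care is needed is to record that $S,T\subseteq\{1,\dots,k-1\}$ and $k\le n$, so that the inequality $|S|+|T|\ge n$ genuinely exceeds the size $k-1$ of the ambient index set.
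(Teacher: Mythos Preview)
Your proof is correct and is exactly the standard longest-path/rotation argument for Dirac's theorem. Note, however, that the paper does not supply its own proof of Theorem~\ref{thA}: it is quoted as a classical result from \cite{Bondy} and used as a black box, so there is nothing in the paper to compare your argument against.
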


If $G$ is an even graph (that is, $G$ has an even number of vertices), then $\mpo(G)\leq \delta(G)$.
From Theorem~\ref{lem2-1}, this bound is not true for odd graphs. We now consider
upper bounds of $\mpo(G)$ for an odd graph $G$.

Given an edge $uv\in E(G)$, the
parameter $\xi_G(uv)=d_G(u)+d_G(v)-2$ is \emph{the degree of the
edge $uv$} and the parameter $\xi(G)=\min\{\xi_G(uv)\,|\, uv \in
E\}$ is the \emph{minimum edge-degree} of $G$.

\begin{pro}\label{pro2-1}
Let $G$ be an odd graph. Then
$$
\mpo(G)\leq \xi(G)+1.
$$
Moreover, the bound is sharp.
\end{pro}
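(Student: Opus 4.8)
The plan is to exhibit an explicit matching preclusion set of size exactly $\xi(G)+1$. The first observation is that since $G$ has an odd number of vertices it can never have a perfect matching; hence the only thing we need to destroy by deleting edges is every \emph{almost}-perfect matching, i.e.\ we must produce a spanning subgraph in which at least two vertices are uncovered by every matching.

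Concretely, I would pick an edge $uv\in E(G)$ realizing the minimum edge-degree, so that $\xi_G(uv)=d_G(u)+d_G(v)-2=\xi(G)$, and let $F$ be the set of all edges of $G$ that are incident with $u$ or with $v$. The edge $uv$ is the only edge counted both among the edges at $u$ and among the edges at $v$, so $|F|=d_G(u)+d_G(v)-1=\xi(G)+1$. In $G-F$ both $u$ and $v$ are isolated vertices. Since an almost-perfect matching leaves exactly one vertex uncovered, while in $G-F$ the two distinct vertices $u$ and $v$ are uncovered by every matching, $G-F$ has no almost-perfect matching; and having odd order it has no perfect matching either. Therefore $F$ is a matching preclusion set and $\mpo(G)\leq |F|=\xi(G)+1$.

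For the sharpness claim I would invoke Theorem~\ref{lem2-1}: take $G=K_n$ with $n$ odd and $n\geq 9$. Every edge of $K_n$ has degree $2(n-1)-2=2n-4$, so $\xi(K_n)+1=2n-3$, which is precisely $\mpo(K_n)$. Hence the inequality $\mpo(G)\leq \xi(G)+1$ cannot be improved in general.

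There is essentially no serious obstacle here; the only points requiring a little care are the edge count (making sure $uv$ is not double-counted in $|F|$) and checking that the complete graph of odd order realizes equality through Theorem~\ref{lem2-1}. One may also note that the statement implicitly assumes $E(G)\neq\emptyset$ so that $\xi(G)$ is defined, and that when $\mpo(G)=0$ the inequality is trivial, so no degenerate case causes trouble.
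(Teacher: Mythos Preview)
Your proof is correct and follows essentially the same approach as the paper: both isolate the two endpoints of a minimum-degree edge by deleting all incident edges, count $|F|=d_G(u)+d_G(v)-1=\xi(G)+1$, and then cite Theorem~\ref{lem2-1} on odd complete graphs for sharpness. Your additional remarks on the degenerate cases ($E(G)=\emptyset$, $\mpo(G)=0$) are a welcome bit of extra care not spelled out in the paper.
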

\begin{proof}
From the definition of $\xi(G)$, there exists two vertices $u,v$
such that $\xi_G(uv)=\xi(G)$.  Let $X=E_G[u,N_G(u)]\cup
E_G[v,N_G(v)]\cup \{uv\}$. Since $u$ and $v$ are two isolated vertices in
$G-X$, it follows that $G-X$ contains neither perfect matching nor
almost perfect matching, and hence $\mpo(G)\leq
|X|=d_G(u)+d_G(v)-1=\xi(G)+1$.

To show the sharpness of this bound, we consider the complete graph
$K_n$, where $n$ is odd. By Theorem~\ref{lem2-1}, $\mpo(K_n)=2n-3=\xi(K_n)+1$ for $n\geq 9$.
\end{proof}

We now give a slightly more sophisticated upper bound.

\begin{pro}\label{pro2-3}
Let $G$ be an odd graph, and let $v$ be a vertex of $G$ such that
$d_G(v)=\delta(G)$. Then
$$
\mpo(G)\leq \delta(G)+\delta(G-v).
$$
Moreover, the bound is sharp.
\end{pro}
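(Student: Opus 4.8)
The plan is to mimic the construction used in Proposition~\ref{pro2-1}, but to delete edges only around a single low-degree vertex and around a cleverly chosen neighbor of it. Let $v$ be a vertex with $d_G(v)=\delta(G)$, and let $u$ be a vertex of $G-v$ achieving $d_{G-v}(u)=\delta(G-v)$; I may assume $u$ is a neighbor of $v$ (if not, I can argue directly, or note that such a low-degree vertex in $G-v$ together with $v$ still works), so I set
$$
X = E_{G-v}[u,N_{G-v}(u)] \cup E_G[v,N_G(v)].
$$
The first set has size $d_{G-v}(u)=\delta(G-v)$ and the second has size $d_G(v)=\delta(G)$; if $uv\in E(G)$ the edge $uv$ lies in the second set and is not double counted since the first set lives in $G-v$. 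Hence $|X|\le \delta(G)+\delta(G-v)$. In $G-X$ the vertex $v$ is isolated, and the vertex $u$ has all its neighbors-other-than-$v$ removed while its edge to $v$ (if present) was also removed; so $u$ is isolated in $G-X$ as well. Two isolated vertices preclude both a perfect matching and an almost-perfect matching (an almost-perfect matching can omit only one vertex), so $\mpo(G)\le |X|\le \delta(G)+\delta(G-v)$.

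The one delicate point is the choice of $u$: I want $u\in N_G(v)$ so that the edge $uv$ gets counted only once, giving the clean bound $\delta(G)+\delta(G-v)$ rather than $\delta(G)+\delta(G-v)+1$. If the vertex realizing $\delta(G-v)$ is not adjacent to $v$, then deleting $E_{G-v}[u,N_{G-v}(u)]\cup E_G[v,N_G(v)]$ still isolates both $u$ and $v$ and uses exactly $\delta(G-v)+\delta(G)$ edges (no overlap, no $uv$ edge to worry about), so the bound still holds. Thus in all cases $|X|\le \delta(G)+\delta(G-v)$, which is the main obstacle resolved: one just has to handle the adjacency bookkeeping carefully so as never to exceed $\delta(G)+\delta(G-v)$.

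For sharpness I would again turn to the odd complete graph $K_n$ with $n\ge 9$. Here $\delta(K_n)=n-1$ and $K_n-v=K_{n-1}$, so $\delta(K_n-v)=n-2$, giving the bound $\delta(G)+\delta(G-v)=(n-1)+(n-2)=2n-3$, which equals $\mpo(K_n)$ by Theorem~\ref{lem2-1}. Hence the bound is attained, and the proof is complete.

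\medskip
\noindent\emph{Remark on which step is hardest.} There is essentially no hard step here; the only thing to be careful about is not to over-count the edge $uv$, i.e.\ to make sure the two edge-sets $E_{G-v}[u,N_{G-v}(u)]$ and $E_G[v,N_G(v)]$ are handled so that their union has size at most $\delta(G)+\delta(G-v)$. Choosing $u$ adjacent to $v$ (when possible) makes the overlap exactly the single edge $uv$, which is already counted in $E_G[v,N_G(v)]$ and deliberately excluded from $E_{G-v}[u,N_{G-v}(u)]$ since the latter is taken inside $G-v$.
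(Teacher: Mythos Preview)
Your proof is correct and follows essentially the same approach as the paper: pick $u$ realizing $\delta(G-v)$, delete all edges incident to $u$ and to $v$, observe that both become isolated so no almost-perfect matching survives, and count $|X|=\delta(G)+\delta(G-v)$. The paper writes this more tersely as $X=E_G[v,N_G(v)]\cup E_G[u,N_G(u)]$ and simply asserts $|X|=d_G(v)+d_{G-v}(u)$ without the case analysis you give; your extra care about whether $uv\in E(G)$ is not wrong, just unnecessary once one notes that $d_G(u)-[uv\in E(G)]=d_{G-v}(u)$ in either case. For sharpness, the paper uses the odd path $P_n$ (where $\delta(G)+\delta(G-v)=1+1=2=\mpo(P_n)$) and a $K_{2n+1}$ with two pendants attached, rather than your $K_n$; all three examples are valid.
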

\begin{proof}
Let $u$ be a vertex of degree $\delta(G-v)$ in $G-v$. Let
$X=E_G[v,N_G(v)]\cup E_G[u,N_G(u)]$. Clearly, $u,v$ are two isolated
vertices in $G-X$, and hence there is no almost-perfect matchings in
$G-X$. So $\mpo(G)\leq d_G(v)+d_{G-v}(u)=\delta(G)+\delta(G-v)$, as
desired.

To show the sharpness of this bound, we consider the path graph $P_n
\ (n\geq 3)$, where $n$ is odd. Clearly,
$\mpo(P_n)=2=\delta(G)+\delta(G-v)$. Another example is to start with a $K_{2n+1}$ (with $n\geq 4$) and attach two pendant vertices. Then the resulting graph has matching preclusion number 2. (This also shows that this bound is
better than the one in Proposition~\ref{pro2-1}.
\end{proof}

Note that each graph $G$ with $n$ vertices is a spanning subgraph of
$K_n$. The following bounds are immediate by Observation
\ref{obs1-1} and Theorem~\ref{lem2-1}.

\begin{pro}\label{pro2-2}
Let $G$ be a connected graph of order $n$. Then
$$
0\leq \mpo(G)\leq \begin{cases}
n-1 & \text{if $n$ is even and $n\geq 2$,}\\
2n-3 & \text{if $n$ is odd and $n\geq 9$.}
\end{cases}
$$
Moreover, the bounds are sharp.
\end{pro}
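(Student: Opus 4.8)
The plan is to derive both bounds directly from the two results quoted immediately before the statement, since the proposition is essentially a corollary of Observation~\ref{obs1-1} and Theorem~\ref{lem2-1}. For the lower bound, $\mpo(G)$ is by definition the size of a set of edges, so $\mpo(G)\geq 0$ trivially, with equality exactly when $G$ has neither a perfect nor an almost-perfect matching. For the upper bound, the key observation is that every graph $G$ of order $n$ is a spanning subgraph of $K_n$; hence Observation~\ref{obs1-1}(1) gives $\mpo(G)\leq \mpo(K_n)$, and Theorem~\ref{lem2-1} evaluates the right-hand side as $n-1$ when $n$ is even and $n\geq 2$, and as $2n-3$ when $n$ is odd and $n\geq 9$. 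This already yields the displayed chain of inequalities for every connected $G$.

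It remains to verify sharpness. For the upper bound there is nothing further to do: $K_n$ is itself connected, so it is an admissible extremal graph and attains the bound by Theorem~\ref{lem2-1}. For the lower bound one must exhibit, within the stated range of $n$, a connected graph of order $n$ with $\mpo=0$, that is, one possessing neither a perfect nor an almost-perfect matching. The natural candidate is the star $K_{1,n-1}$: since all its edges share the center, any matching contains at most one edge and therefore covers at most two vertices, so for every $n\geq 4$ the star has neither a perfect matching ($2<n$) nor an almost-perfect matching ($2<n-1$), giving $\mpo(K_{1,n-1})=0$. In particular this covers all even $n\geq 4$ and all odd $n\geq 9$, and (together with the trivial case $K_2$ for the even bound at $n=2$) shows the bounds cannot be improved.

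I do not expect a genuine obstacle here; the proof is short. The only points needing a little care are (i) ensuring the graphs used to witness sharpness are connected — this is why $K_n$, rather than a disconnected graph, is taken for the upper bound, and a star, rather than some matching-free forest with small components, is taken for the lower bound; and (ii) respecting the order restrictions inherited from Theorem~\ref{lem2-1} ($n\geq 2$ in the even case, $n\geq 9$ in the odd case), so that both the asserted bounds and the sharpness examples lie in the claimed range.
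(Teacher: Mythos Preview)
Your proposal is correct and follows exactly the route the paper intends: the paper states that the bounds are ``immediate by Observation~\ref{obs1-1} and Theorem~\ref{lem2-1}'' via the spanning-subgraph inequality $\mpo(G)\leq \mpo(K_n)$, and you reproduce this together with explicit sharpness witnesses ($K_n$ for the upper bound, $K_{1,n-1}$ for the lower) that the paper leaves implicit.
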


\section{Graphs with given matching preclusion numbers}

In this section, we characterize graphs with large and small matching preclusion numbers.

\subsection{Graphs with small matching preclusion numbers}

Let $o(G)$ be the number of odd components of $G$. If a graph $G$
has a perfect matching $M$, then $o(G-S)\leq |S|$ for all
$S\subseteq V(G)$. The following result due to Tutte shows that the converse is
true.

\begin{thm}{\upshape \cite{Tutte}}\label{thC}
A graph $G$ has a perfect matching if and only if every subset $S$
of vertices satisfies Tutte's condition, that is, $o(G-S)\leq |S|$.
\end{thm}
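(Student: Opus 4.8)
The plan is to prove the two implications separately. The forward direction (necessity of Tutte's condition) is routine: if $M$ is a perfect matching of $G$ and $S\subseteq V(G)$, then every odd component $C$ of $G-S$ contains a vertex whose $M$-partner lies outside $C$, and since $C$ is a component of $G-S$ that partner must lie in $S$; distinct odd components use distinct such partners, so $o(G-S)\le |S|$. I would dispose of this in two lines and spend the rest of the argument on the converse.

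For the converse I would use the edge-maximal counterexample argument of Lov\'asz. Suppose $G$ satisfies Tutte's condition but has no perfect matching. Taking $S=\emptyset$ gives $o(G)=0$, so $n:=|V(G)|$ is even. Adding an edge never destroys Tutte's condition -- a short check of how an added edge changes the number of odd components of $G-S$ -- while $K_n$ has a perfect matching; hence the family of supergraphs of $G$ on $V(G)$ that satisfy Tutte's condition but have no perfect matching is nonempty and omits $K_n$, so it has an edge-maximal member $G^{*}$, with $G^{*}\ne K_n$. Let $U=\{v\in V(G^{*}):d_{G^{*}}(v)=n-1\}$. The heart of the proof is the claim that every component of $G^{*}-U$ is complete. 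Granting this, I would finish as follows: Tutte's condition with $S=U$ gives $o(G^{*}-U)\le |U|$, so I can choose one vertex from each odd component of $G^{*}-U$ and match it to a distinct vertex of $U$; each even component of $G^{*}-U$ is an even clique and has a perfect matching, each odd component with its chosen vertex removed is again an even clique and has a perfect matching, and a parity count (using that $n$ is even) shows that the unused vertices of $U$ are even in number, hence -- being mutually adjacent -- can be matched among themselves. This assembles a perfect matching of $G^{*}$, contradicting the choice of $G^{*}$.

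It remains to prove the claim, which I expect to be the main obstacle. I would argue by contradiction: if some component of $G^{*}-U$ is not complete, there are vertices $x,y,z$ with $xy,yz\in E(G^{*})$ and $xz\notin E(G^{*})$, and since $y\notin U$ there is a vertex $w\notin\{x,z\}$ with $yw\notin E(G^{*})$. By maximality, $G^{*}+xz$ and $G^{*}+yw$ have perfect matchings $M_1$ and $M_2$, and necessarily $xz\in M_1$ and $yw\in M_2$. Since $M_1$ and $M_2$ are perfect matchings of the same vertex set, $M_1\triangle M_2$ is a disjoint union of even cycles alternating between $M_1$ and $M_2$. If $xz$ and $yw$ lie in different cycles, replacing the $M_1$-edges by the $M_2$-edges along the cycle through $xz$ yields a perfect matching of $G^{*}$ (it avoids $xz$ because that edge was removed, and it avoids $yw$ because that edge lies in a different cycle), a contradiction. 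If $xz$ and $yw$ lie on a common cycle $C$, a short case analysis on the cyclic order in which $\{x,z\}$ and $\{y,w\}$ occur on $C$ shows that one of the chords $yx$, $yz$ is usable: deleting $y$ together with the appropriate endpoint of $xz$ splits $C$ into two paths that both turn out to have even order (this is where the alternation of $C$ is used), each of which therefore has a perfect matching, and adding back that chord completes a perfect matching of $V(C)$ avoiding both $xz$ and $yw$; combining it with $M_1$ after deleting the $M_1$-edges lying on $C$ gives a perfect matching of $G^{*}$, again a contradiction. The delicate point throughout is this last case -- choosing the correct chord at $y$ and the matching pair of arcs so that the resulting matching avoids both forbidden edges.
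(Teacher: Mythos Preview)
The paper does not give its own proof of this statement: Theorem~\ref{thC} is quoted as Tutte's classical result with a citation to \cite{Tutte} and is used only as a tool. Your proposal is the standard Lov\'asz maximal-counterexample proof of Tutte's theorem, and the outline is correct; the forward direction and the construction of the perfect matching of $G^{*}$ from the claim are clean, and the symmetric-difference argument for the claim is the textbook one. The only loosely written part is the final common-cycle case, but the parity analysis you sketch (choosing the chord $yx$ or $yz$ so that the two arcs of $C$ left after deleting $y$ and the appropriate endpoint of $xz$ each carry an alternating matching avoiding both $xz$ and $yw$) is exactly the standard completion and goes through without difficulty.
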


Berge \cite{Berge} obtained a similar result for almost-perfect matchings.

\begin{thm}{\upshape \cite{Berge}}\label{thD}
A graph $G$ has an almost-perfect matching if and only if every subset $S$ of vertices  satisfies Berge's condition, that is, $o(G-S)\leq |S|+1$.
\end{thm}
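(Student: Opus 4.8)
The plan is to derive Theorem~\ref{thD} from Tutte's theorem (Theorem~\ref{thC}) by the classical device of adjoining a universal vertex. Let $G^{+}$ be the graph obtained from $G$ by adding one new vertex $v_0$ and joining $v_0$ to every vertex of $G$. The core of the argument is the equivalence
\[
G \text{ has an almost-perfect matching} \iff G^{+}\text{ has a perfect matching.}
\]
Granting this, one applies Theorem~\ref{thC} to $G^{+}$ and translates Tutte's condition for $G^{+}$ into Berge's condition for $G$.

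First I would prove the reduction in both directions. If $M$ is an almost-perfect matching of $G$ whose only uncovered vertex is $w$, then $M\cup\{v_0w\}$ is a perfect matching of $G^{+}$, since $v_0$ is adjacent to $w$. Conversely, if $M'$ is a perfect matching of $G^{+}$, then $v_0$ is covered by a unique edge $v_0w\in M'$, and $M'\setminus\{v_0w\}$ covers every vertex of $G$ except $w$, hence is an almost-perfect matching of $G$. The parities are consistent: an almost-perfect matching of $G$ forces $|V(G)|$ to be odd, and then $|V(G^{+})|$ is even, as a perfect matching requires.

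Next I would rewrite the condition. By Theorem~\ref{thC}, $G^{+}$ has a perfect matching if and only if $o(G^{+}-S')\le|S'|$ for every $S'\subseteq V(G^{+})$. Split on whether $v_0\in S'$. If $v_0\in S'$, put $S=S'\setminus\{v_0\}\subseteq V(G)$; then $G^{+}-S'=G-S$, and the inequality becomes $o(G-S)\le|S|+1$, which is Berge's condition at $S$. If $v_0\notin S'$, then $S'\subseteq V(G)$ and, because $v_0$ is adjacent to every vertex outside $S'$, the graph $G^{+}-S'$ is connected (it is the single vertex $v_0$ when $S'=V(G)$); hence $o(G^{+}-S')\le 1\le|S'|$ whenever $S'\ne\emptyset$, so these instances impose no constraint. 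Therefore $G^{+}$ satisfies Tutte's condition exactly when $G$ satisfies Berge's condition (with $|V(G)|$ odd), and combining this with the reduction yields the theorem.

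The reduction carries essentially all the content, so there is no deep obstacle; the points needing care are stating the matching equivalence in both directions and the bookkeeping in the case $v_0\notin S'$, particularly the degenerate choices $S'=\emptyset$ and $S'=V(G)$. It is worth noting that the instance $S'=\emptyset$ of Tutte's condition for $G^{+}$ reads $o(G^{+})\le 0$, i.e. it encodes the parity requirement that $|V(G)|$ be odd; this is automatically in force whenever $G$ has an almost-perfect matching, and should be read as implicit in the statement of Theorem~\ref{thD}.
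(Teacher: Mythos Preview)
Your argument is correct and is the standard derivation of Berge's criterion from Tutte's theorem via the universal-vertex construction. You handle the reduction cleanly in both directions and correctly isolate the parity issue at $S'=\emptyset$.

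As for comparison: the paper does not supply a proof of Theorem~\ref{thD}. It is stated as a cited result from \cite{Berge} and used as a black box, so there is no in-paper argument to set yours against. Your write-up would serve perfectly well as a self-contained justification should one be wanted; the only cosmetic point is that the paper tacitly restricts attention to odd graphs when invoking almost-perfect matchings, so the parity caveat you flag is indeed the intended reading.
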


Clearly the above theorems can be used to characterized graph with matching preclusion number at most $k$ in the following way for even graphs: $\mpo(G)\leq k$ if and only if there exist $T\subset E(G)$ where
$|T|\leq k$ and $S\subseteq V(G-T)$ such that $o(G-S\cup T)>|S|$. Similarly for odd graphs. From an algorithmic point of view, the theorems of Tutte and Berge are important because such conditions formed a basis for a polynomial time algorithm in determining whether a graph has a perfect matching and almost perfect matching. Thus if $k$ is fixed, then there is a polynomial time algorithm to determine whether $\mpo(G)\leq k$. If $k$ is not fixed, then the problem of determining whether
$\mpo(G)\leq k$ is {\cal NP}-complete; see \cite{LacroixMMP}.

The following simple result gives a bound on the smallest degree if $\mpo(G)=k$.

\begin{pro}\label{pro3-2}
Let $G$ be a graph of order $n$ and $k\geq 1$. If $\mpo(G)=k$, then $\delta(G)\leq \frac{n}{2}+k-2$.
\end{pro}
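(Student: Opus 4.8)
The plan is to argue by contrapositive: I will assume $\delta(G) \geq \frac{n}{2} + k - 1$ and show that $\mpo(G) \geq k+1$, which contradicts $\mpo(G) = k$. So suppose we delete any set $F$ of at most $k$ edges from $G$; I want to show $G - F$ still has a perfect matching (if $n$ is even) or an almost-perfect matching (if $n$ is odd). The key observation is that deleting $k$ edges lowers the minimum degree by at most $k$, so $\delta(G - F) \geq \delta(G) - k \geq \frac{n}{2} - 1$.

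From here I would split into the two parities. If $n$ is even, then $\delta(G-F) \geq \frac{n}{2} - 1 \geq \frac{n-1}{2}$; I would like to invoke Dirac's theorem (Theorem~\ref{thA}), but Dirac requires $\delta \geq \frac{n}{2}$, and here we only have $\delta \geq \frac{n}{2} - 1$ in the worst case, so a direct application does not quite work. The standard fix is to use the stronger fact that a graph on $n$ vertices ($n$ even) with minimum degree at least $\frac{n}{2} - 1$ has a perfect matching — this follows, for instance, from Tutte's theorem (Theorem~\ref{thC}): if $o((G-F) - S) > |S|$ for some $S$, then picking a vertex in an odd component of size, counting degrees, and comparing with $|S| + 1 \leq \frac{n}{2}$ forces the odd component to be too small to accommodate the degree bound, a contradiction. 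For $n$ odd, I would add a universal vertex to get a graph on $n+1$ (even) vertices with minimum degree at least $\frac{n}{2} \geq \frac{(n+1)-1}{2}$ and run the same Tutte-style argument; a perfect matching there restricts to an almost-perfect matching of $G - F$.

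Actually, the cleanest route avoids parity case analysis entirely by working directly with the Tutte--Berge formula. For any $S \subseteq V(G-F)$, each odd component $C$ of $(G-F) - S$ has every vertex with at least $\delta(G) - k \geq \frac{n}{2} - 1$ neighbors inside $S \cup C$, so $|C| + |S| - 1 \geq \frac{n}{2} - 1$, giving $|C| \geq \frac{n}{2} - |S|$. If there were at least $|S| + 1$ odd components (to violate Tutte, or $|S|+2$ to violate Berge), summing their sizes plus $|S|$ would exceed $n$ unless $|S|$ is small, and the small-$|S|$ cases (notably $S = \emptyset$, where connectivity from the degree bound forces a single component) are handled directly. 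I expect the main obstacle to be the bookkeeping at the boundary — pinning down the exact edge cases where $|S|$ is $0$ or $1$ and confirming the inequality $\delta(G) \leq \frac{n}{2} + k - 2$ is tight there rather than off by one — but these are finite checks and the degree-counting inequality $|C| \geq \frac{n}{2} - |S|$ does the real work.
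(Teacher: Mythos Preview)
Your approach has a genuine gap. The ``standard fact'' you invoke --- that an even graph with $\delta \geq \frac{n}{2}-1$ has a perfect matching --- is false: two disjoint triangles ($n=6$, $\delta=2$) and $K_{2,4}$ ($n=6$, $\delta=2$, connected) are both counterexamples. Your Tutte--Berge sketch inherits the same problem: when $|S|=0$ or $|S|=\frac{n}{2}-1$, the inequality $|C| \geq \frac{n}{2}-|S|$ does not by itself yield a contradiction, and ``connectivity from the degree bound'' fails if the bound in question is only $\delta(G-F) \geq \frac{n}{2}-1$. These boundary cases \emph{can} be closed, but only by returning to the stronger hypothesis $\delta(G) \geq \frac{n}{2}+k-1$ and counting how many of the $k$ deleted edges each low-degree vertex of $G-F$ must absorb; that is a real argument, not the ``finite check'' you describe, and it makes the proof considerably longer than necessary.

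The paper sidesteps all of this with a trick you missed: instead of deleting all $k$ edges of an optimal preclusion set $F$ and being stuck at $\delta \geq \frac{n}{2}-1$, delete only $k-1$ of them (namely $F\setminus\{e\}$ for some $e\in F$). Then $\delta\bigl(G-(F\setminus\{e\})\bigr) \geq \frac{n}{2}$, Dirac's theorem applies directly to give a Hamiltonian cycle, and removing the single remaining edge $e$ still leaves at least a Hamiltonian path, which already contains a perfect or almost-perfect matching. This turns the whole argument into two lines with no case analysis.
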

\begin{proof}
Suppose $\delta(G)\geq \frac{n}{2}+k-1)$. Let $F$ be an optimal matching preclusion set. Let $e\in F$. Let Then $\delta(G-(F\setminus\{e\}))\geq \frac{n}{2})$. From Theorem \ref{thA}, $G-(F\setminus\{e\})$
contains a Hamiltonian cycle. Thus $G-F$ contains a
perfect matching or an almost-perfect matching, which is a contradiction.
\end{proof}

\subsection{Graphs with large matching preclusion number}

We first characterize even graphs.

\begin{pro}\label{pro3-4}
Let $G$ be an even graph of order $n\geq 2$. Then $\mpo(G)=n-1$
if and only if $G$ is a complete graph.
\end{pro}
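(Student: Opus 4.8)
The plan is to prove both directions. The backward direction is essentially Theorem~\ref{lem2-1}: if $G=K_n$ with $n$ even, then $\mpo(G)=n-1$, so nothing needs to be done beyond citing it. The substantive content is the forward direction: if $G$ is an even graph of order $n$ that is not complete, then $\mpo(G)\leq n-2$, so $\mpo(G)=n-1$ forces $G=K_n$. First I would dispose of trivial or small cases: if $G$ has no perfect matching then $\mpo(G)=0<n-1$ for $n\geq 4$ (and $n=2$ with $G$ not complete is vacuous), so we may assume $G$ has a perfect matching; in particular $\delta(G)\geq 1$.

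The main step is to show $\delta(G)\leq n-2$ whenever $G$ is not complete, and then to upgrade this to the statement $\mpo(G)\leq n-2$. The inequality $\delta(G)\leq n-2$ is immediate: if $G$ is not complete, some vertex misses at least one other vertex, so its degree is at most $n-2$; hence $\delta(G)\leq n-2$. By Observation~\ref{obs1-1}(2), for an even graph $\mpo(G)\leq \delta(G)\leq n-2$, which is exactly what we want. So in fact the whole forward direction reduces to this one-line degree observation combined with Observation~\ref{obs1-1}(2).

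I should double-check the edge cases so the bound $\mpo(G)\leq n-2$ is actually meaningful. For $n=2$: the only even graph on $2$ vertices that is not $K_2$ is $\overline{K_2}$, which has no perfect matching, so $\mpo=0=n-2$, consistent. For $n\geq 4$ and $G$ not complete, $\delta(G)\leq n-2$ gives $\mpo(G)\leq n-2<n-1$. Conversely $\mpo(K_n)=n-1$ by Theorem~\ref{lem2-1}. Hence $\mpo(G)=n-1$ iff $G=K_n$.

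The ``hard part,'' such as it is, is purely a matter of being careful with small $n$ and with the degenerate case where $G$ fails to have a perfect matching (so that $\mpo(G)=0$ by definition rather than by the degree bound); once those are handled, the argument is a direct application of Observation~\ref{obs1-1}(2) and Theorem~\ref{lem2-1}, with no real obstacle. If one wanted a self-contained argument avoiding Observation~\ref{obs1-1}(2), one could instead exhibit an explicit matching preclusion set of size $\delta(G)$ by deleting all edges at a minimum-degree vertex $v$, leaving $v$ isolated; I would mention this as the underlying reason the degree bound holds, but citing Observation~\ref{obs1-1}(2) is cleaner.
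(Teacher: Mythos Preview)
Your proposal is correct and uses essentially the same approach as the paper: both directions rest on Theorem~\ref{lem2-1} and Observation~\ref{obs1-1}(2). The only cosmetic difference is that the paper argues the forward direction directly (from $\mpo(G)=n-1$ it gets $\delta(G)\geq n-1$, hence $G=K_n$) rather than via the contrapositive, and it omits the small-$n$ and no-perfect-matching case analysis, which is indeed unnecessary since $\mpo(G)\leq\delta(G)$ already covers those situations.
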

\begin{proof}
From Theorem~\ref{lem2-1}, if $G$ is a complete graph of order $n$, then $\mpo(G)=n-1$. Conversely, we suppose $\mpo(G)=n-1$. Then $\delta(G)\geq \mpo(G)=n-1$, and hence $G$ is a complete graph, as desired.
\end{proof}

\begin{thm}\label{th3-3}
Let $G$ be an even graph of order $n\geq 4$. Then $\mpo(G)=n-2$
if and only if $\delta(G)=n-2$.
\end{thm}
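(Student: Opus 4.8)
The plan is to prove both directions. For the ``only if'' direction, suppose $\mpo(G)=n-2$. By Observation~\ref{obs1-1}(2), $\mpo(G)\leq\delta(G)$, so $\delta(G)\geq n-2$. If $\delta(G)=n-1$, then $G=K_n$ and Proposition~\ref{pro3-4} (or Theorem~\ref{lem2-1}) gives $\mpo(G)=n-1\neq n-2$, a contradiction. Hence $\delta(G)=n-2$. This direction is essentially immediate.

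The ``if'' direction is the substantive part: assuming $\delta(G)=n-2$ I must show $\mpo(G)=n-2$. The upper bound $\mpo(G)\leq\delta(G)=n-2$ is again Observation~\ref{obs1-1}(2), so the work is the lower bound $\mpo(G)\geq n-2$, i.e.\ showing that deleting any set $F$ of at most $n-3$ edges leaves a graph with a perfect matching. First I would dispose of small cases: for $n=4$, $\delta(G)=2$ forces $G$ to be $C_4$ or $K_4$ minus a perfect matching, and one checks $\mpo=2$ directly. For $n\geq 6$ I want to invoke Dirac-type reasoning as in Proposition~\ref{pro3-2}: pick any $e\in F$ and set $H=G-(F\setminus\{e\})$; then $\delta(H)\geq (n-2)-(n-4)=2$, which is too weak for Dirac directly, so a cruder count on $|F|$ alone will not suffice and I need to exploit the structure of a $\delta=n-2$ graph more carefully.

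The key structural observation is that a graph with $\delta(G)=n-2$ is the complement of a graph $\overline{G}$ with $\Delta(\overline G)\leq 1$, i.e.\ $\overline G$ is a disjoint union of edges and isolated vertices — so $G=K_n$ minus a matching $M$ with $|M|=n-\delta'$ for the appropriate count, and in particular $G$ is $K_n$ minus a matching of size at least one (size exactly the number of vertices of degree $n-2$, divided by two). Thus $G\cong K_n - M$ for some matching $M$ with $1\leq |M|\leq n/2$. Now the problem reduces to: for $G=K_n-M$ with $M$ a nonempty matching, and any edge set $F$ with $|F|\leq n-3$, show $G-F$ has a perfect matching. I would verify Tutte's condition (Theorem~\ref{thC}) for $G-F$: for $S\subseteq V(G-F)$ I must show $o((G-F)-S)\leq |S|$. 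Since $G-F$ is obtained from $K_n$ by deleting at most $(n-3)+n/2$ edges, which for $n\geq 6$ is at most $\binom{n}{2}$ minus a lot, the graph $(G-F)-S$ is ``dense'': if $|S|\leq n/2-2$ or so, the complement of $(G-F)-S$ on $n-|S|$ vertices has maximum degree much less than $n-|S|-1$, forcing it to be connected (indeed having a spanning connected subgraph), hence $o((G-F)-S)\leq 1\leq |S|$ once $|S|\geq 1$, and for $S=\emptyset$ connectedness of a dense even-order graph together with the known fact that sufficiently dense graphs are Hamiltonian (Theorem~\ref{thA} applies once $\delta\geq n/2$) gives a perfect matching; for larger $|S|$ the trivial bound $o((G-F)-S)\leq n-|S|\leq |S|$ handles it.

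The main obstacle I anticipate is the bookkeeping in the Tutte-condition argument for the ``medium'' range of $|S|$ — roughly $n/2 - c \leq |S| \leq n/2 + c$ — where neither ``the graph is too dense to have many components'' nor ``there are too few vertices left'' is overwhelmingly true, so I will need a careful edge count: each vertex of $(G-F)-S$ has degree at least $(n-2) - |S| - (\text{edges of }F\text{ at it})$, and summing the $|F|\leq n-3$ deficiencies shows that at most a bounded number of vertices can have low degree, so the number of odd components is small (a component contributing an isolated or near-isolated vertex must absorb several edges of $F$ or of $M$). I would also need to treat separately the degenerate possibility that $G-F$ has odd order after removing $S$ with $|S|$ of the wrong parity — but since $n$ is even this parity bookkeeping is routine. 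I expect the clean write-up to split into: (i) $n=4$ by inspection; (ii) the complement reformulation $G=K_n-M$; (iii) Tutte verification with the three ranges of $|S|$.
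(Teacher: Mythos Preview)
Your ``only if'' direction matches the paper exactly. For the ``if'' direction, however, you are proposing a substantially more laborious route than the paper takes, and you have missed the key simplification.

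Once you have observed that $\delta(G)=n-2$ forces $G=K_n-M$ for some nonempty matching $M$, extend $M$ to a perfect matching $M'$ of $K_n$ (possible since $n$ is even). The complete graph $K_n$ decomposes into $n-1$ edge-disjoint perfect matchings, so $K_n-M'$ contains $n-2$ edge-disjoint perfect matchings; hence so does $G\supseteq K_n-M'$. Now deleting any $n-3$ edges can meet at most $n-3$ of these perfect matchings, so at least one survives intact in $G-X$. That is the entire argument in the paper: no Tutte condition, no case analysis on $|S|$, no separate treatment of $n=4$.

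Your Tutte-based plan is not wrong in spirit and could probably be pushed through, but the ``medium $|S|$'' bookkeeping you anticipate as the main obstacle is genuinely unpleasant, and entirely avoidable. The $1$-factorization trick is the natural tool here (and the paper reuses variants of it in Theorems~\ref{th3-5a} and~\ref{th3-6}), so it is worth internalizing: whenever you need to show that a graph close to $K_n$ retains a perfect matching after few edge deletions, look for many edge-disjoint perfect matchings rather than verifying Tutte's condition from scratch. Incidentally, your $n=4$ case check is redundant even within your own plan ($C_4$ \emph{is} $K_4$ minus a perfect matching), and the factorization argument handles $n=4$ uniformly.
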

\begin{proof}
If $\mpo(G)=n-2$, then $\delta(G)\geq \mpo(G)=n-2$, and hence
$\delta(G)=n-2$ by Proposition \ref{pro3-4}. Conversely, if
$\delta(G)=n-2$, then $\mpo(G)\leq n-2$. We need to show $\mpo(G)\geq
n-2$. It suffices to prove that for any $X\subseteq E(G)$ and $|X|=n-3$,
$G-X$ has a perfect matching. Since $\delta(G)=n-2$, it follows that
$G$ is a graph obtained from $K_{n}$ by deleting a perfect matching. Note
that there are $n-2$ edge-disjoint perfect matchings in $G$. Since we
only delete $n-3$ edges from $G-X$, it follows that $G-X$ has a
perfect matching. So $\mpo(G)\geq n-2$. From the above argument, we
conclude that $\mpo(G)=n-2$.
\end{proof}

For $n\geq 4k+6$, we have the following general result.
\begin{thm}\label{th3-5a}
Let $n,k$ be two integers with $n\geq 4k+6$, and let $G$ be an even
graph of order $n$. Then $\mpo(G)=n-k$ if and only if $\delta(G)=n-k$.
\end{thm}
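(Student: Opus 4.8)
The plan is to mimic the structure of the proof of Theorem~\ref{th3-3}, which is the case $k=2$, but to replace the counting argument ``$G$ has $n-2$ edge-disjoint perfect matchings'' with a Tutte/Berge-based argument, since for general $k$ the graph $G$ with $\delta(G)=n-k$ need not decompose nicely. One direction is essentially free: if $\mpo(G)=n-k$ then $\delta(G)\geq\mpo(G)=n-k$ by Observation~\ref{obs1-1}(2), while $\delta(G)\leq n-k$ follows because $\mpo(G)\le\delta(G)$ together with the fact (Proposition~\ref{pro3-4}, Theorem~\ref{th3-3}, and an easy induction, or a direct argument) that $\delta(G)=n-j$ forces $\mpo(G)\ge n-j$ for small $j$; more cleanly, if $\delta(G)\ge n-k+1$ then deleting $n-k$ edges still leaves $\delta\ge n-k+1-(n-k)=1$ is too weak, so instead one observes that a vertex of degree $\ge n-k+1$ is incident to at most $k-2$ non-neighbours, and a matching preclusion set of size $n-k$ cannot isolate or near-isolate vertices quickly enough — this gives $\delta(G)=n-k$. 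So the real content is the converse.

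For the converse, assume $\delta(G)=n-k$ and show $\mpo(G)\ge n-k$, i.e. for every $X\subseteq E(G)$ with $|X|=n-k-1$ the graph $G-X$ has a perfect matching. I would argue by Tutte's Theorem (Theorem~\ref{thC}): suppose not, so there is $S\subseteq V(G-X)$ with $o((G-X)-S)\ge|S|+2$ (parity forces the gap to be at least $2$ since $n$ is even). Let $s=|S|$ and let the components of $(G-X)-S$ that are odd be $C_1,\dots,C_t$ with $t\ge s+2$. The key estimates are: (i) every vertex $w$ in a component $C_i$ has, in $G$, at least $n-k$ neighbours, all of which lie in $S$, in $C_i$, or are endpoints of edges of $X$; (ii) since $n\ge 4k+6$, the components $C_i$ cannot all be tiny unless many edges of $X$ are ``used up''. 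Concretely, a smallest odd component has some vertex $w$ whose $G$-neighbourhood of size $\ge n-k$ must be absorbed by $|S|=s$ vertices of $S$, by $|C_i|-1$ vertices inside its own (small) component, and by edges of $X$ crossing out of $C_i$; counting these crossing edges over all $C_i$ and bounding $s$ from the total vertex count $n\ge s+\sum|C_i|\ge s+(s+2)$ yields a contradiction with $|X|=n-k-1$ when $n\ge 4k+6$. The hypothesis $n\ge 4k+6$ is exactly what makes the ``at most $k-1$ deleted edges per small component'' budget insufficient to realize $t\ge s+2$ odd components.

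The main obstacle, and where I expect to spend the most care, is the case analysis on the sizes of the odd components $C_i$ in the Tutte violator. If all $C_i$ are singletons, a singleton $\{w\}$ with $d_G(w)\ge n-k$ must have all its $\ge n-k$ incident edges either going into $S$ (at most $s$ of them) or lying in $X$; summing over the $\ge s+2$ singletons forces $|X|\ge (s+2)(n-k-s)$, which for $n\ge 4k+6$ and the feasible range $0\le s\le n-2$ already exceeds $n-k-1$ unless $s$ is large, and when $s$ is large the inequality $n\ge s+t+ (\text{even components})$ pins $s$ down — a short optimization in $s$ closes it. The genuinely delicate subcase is a mix of one or two larger odd components together with singletons, where one must be careful that edges of $X$ internal to a large $C_i$ are not double-counted against the ``crossing'' budget; I would handle this by contracting each $C_i$ and working with the bipartite-like incidence between $\{C_1,\dots,C_t\}$ and $S\cup(\text{edges of }X)$, tracking for each $C_i$ the quantity $e_G(C_i,V\setminus(S\cup C_i))\le |X|$ and using $\delta(G)=n-k$ to lower-bound it by $|C_i|(n-k)-|C_i|(|C_i|-1)-|C_i|\,|S|$. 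Comparing the sum of these lower bounds against $2|X|=2(n-k-1)$ gives the final contradiction, and the arithmetic is where the constant $4k+6$ is pinned down; I would state the resulting inequality as a display and verify it is violated for all admissible parameter values under $n\ge 4k+6$.
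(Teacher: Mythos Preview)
Your approach to the forward direction $(\delta(G)=n-k\Rightarrow\mpo(G)=n-k)$ is genuinely different from the paper's and considerably more laborious. The paper does not use Tutte's Theorem at all; instead it applies Dirac's Theorem (Theorem~\ref{thA}) directly to $G-X$. The argument is a two-case split on $\max_{v}\deg_{G[X]}(v)$: if every vertex is incident to at most $(n-2k)/2$ edges of $X$, then $\delta(G-X)\ge(n-k)-(n-2k)/2=n/2$, so $G-X$ is Hamiltonian and hence has a perfect matching. Otherwise some vertex $v$ absorbs at least $(n-2k+2)/2$ edges of $X$; since $\deg_{G-X}(v)\ge(n-k)-(n-k-1)=1$, one picks $uv\in E(G-X)$, sets $G_1=G-\{u,v\}$, and notes that at most $(n-4)/2$ edges of $X$ survive in $G_1$. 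An Ore-type degree-sum estimate in $G_1-X$ then yields a Hamiltonian cycle there, and the resulting perfect matching $M'$ together with $uv$ is a perfect matching of $G-X$. The hypothesis $n\ge 4k+6$ enters only in this final degree-sum inequality. This completely sidesteps the component-by-component bookkeeping you propose; in particular the ``genuinely delicate subcase'' of mixed component sizes never arises.

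Your Tutte-based outline is plausible in spirit, but as written it has a real gap. The lower bound $e_G(C_i,V\setminus(S\cup C_i))\ge |C_i|(n-k)-|C_i|(|C_i|-1)-|C_i|\,|S|$ can be negative when some $C_i$ is large relative to $n-k-|S|$, so summing these bounds and comparing to $2|X|$ is not automatically a contradiction; you would need a further case split on $\max_i|C_i|$, and you have not verified that the arithmetic actually closes with the constant $4k+6$ in the mixed case. For the reverse direction $(\mpo(G)=n-k\Rightarrow\delta(G)=n-k)$, your first idea --- induction on $k$, invoking the forward direction for smaller $k'$ --- is exactly what the paper does; the ``more cleanly'' alternative you offer afterward is vague and should be discarded.
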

\begin{proof}
Suppose $\delta(G)=n-k$. Then $\mpo(G)\leq n-k$. We need to show
$\mpo(G)\geq n-k$. It suffices to prove that for every $X\subseteq E(G)$ and
$|X|=n-k-1$, $G-X$ has a perfect matching. We
first suppose that $deg_{G[X]}(v)\leq \frac{n-2k}{2}$ for every $v\in
V(G)$. Then
$$
deg_{G-X}(v)=deg_{G}(v)-deg_{G[X]}(v)\geq (n-k)-\frac{n-2k}{2}=\frac{n}{2},
$$
and hence $\delta(G-X)\geq \frac{n}{2}$. From Theorem \ref{thA},
$G-X$ contains a Hamiltonian cycle, and hence there is a perfect
matching in $G-X$. Next, we suppose that there exists a vertex $v\in
V(G)$ such that $deg_{G[X]}(v)\geq \frac{n-2k+2}{2}$. Since
$deg_{G-X}(v)\geq deg_{G}(v)-|X|\geq 1$, it follows that there
exists a vertex $u\in V(G)$ such that $vu\in E(G-X)$. Let
$G_1=G-\{u,v\}$. Clearly, $|V(G_1)|=n-2$ is even, and $|X\cap
E(G_1)|\leq n-k-1-\frac{n-2k+2}{2}=\frac{n-4}{2}$. Since $|X\cap
E(G_1)|\leq \frac{n-4}{2}$, it follows that for any vertex pair
$s,t\in V(G_1)$, $deg_{G_1-X}(s)+deg_{G_1-X}(t)\geq
2(n-k-2)-\frac{n-4}{2}-1\geq n$ since $n\geq 4k+1$. So $G_1-X$ contains a Hamiltonian
cycle, and hence there is a perfect matching in $G_1-X$, say $M'$.
Clearly, $M'\cup \{uv\}$ is a perfect matching of $G-X$. From the
above argument, we conclude that $\mpo(G)=n-k$.

Conversely, we suppose $\mpo(G)=n-k$. We want to show that
$\delta(G)=n-k$. Furthermore, by induction on $k$, we prove that
$\mpo(G)=n-k$ if and only if $\delta(G)=n-k$. From Proposition
\ref{pro3-4}, Theorem \ref{th3-3}, the result
follows for $k=1,2$. Suppose that the argument is true for every
integer $k' \ (k'<k)$, that is, $\mpo(H)=n-k'$ if and only if
$\delta(H)=n-k'$. For integer $k$, it follows from Observation
\ref{obs1-1} that $\delta(G)\geq \mpo(G)=n-k$. We need to show that
$\delta(G)=n-k$. Assume, on the contrary, that $\delta(G)>n-k$. Let
$\delta(G)=n-k+t=n-(k-t)$, where $t\geq 1$. Since $k-t<k$, it
follows from the induction hypothesis that $\mpo(G)=n-k+t<n-k$, which
contradicts $\mpo(G)=n-k$. So $\delta(G)=n-k$.
\end{proof}

Next, we characterize odd graphs with $\mpo(G)=2n-3,2n-4,2n-5$, respectively.
\begin{pro}\label{pro3-5}
Let $G$ be an odd graph of order $n\geq 9$. Then $\mpo(G)=2n-3$ if and only if $G$ is a complete graph.
\end{pro}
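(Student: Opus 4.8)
The plan is to prove the two directions separately, with the forward direction being the substantive one. For the easy direction, if $G$ is a complete graph of order $n$ with $n$ odd and $n\geq 9$, then Theorem~\ref{lem2-1} gives $\mpo(G)=2n-3$ directly. For the converse, suppose $G$ is an odd graph of order $n\geq 9$ with $\mpo(G)=2n-3$, and I want to conclude $G=K_n$. The strategy is to show that the hypothesis $\mpo(G)=2n-3$ forces the minimum degree $\delta(G)$ to be as large as possible, namely $n-1$, which immediately yields $G=K_n$.

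First I would record the two upper bounds available for odd graphs from the earlier results: Proposition~\ref{pro2-1} gives $\mpo(G)\leq \xi(G)+1$, and Proposition~\ref{pro2-3} gives $\mpo(G)\leq \delta(G)+\delta(G-v)$ where $v$ is a vertex of minimum degree. Combining $\mpo(G)=2n-3$ with the first bound, we get $\xi(G)\geq 2n-4$, i.e., $d_G(u)+d_G(v)\geq 2n-2$ for every edge $uv\in E(G)$. Since each degree is at most $n-1$, this forces $d_G(u)=d_G(v)=n-1$ for every edge $uv$; provided $G$ has no isolated vertices (which it must not, since an odd graph with an isolated vertex has $\mpo(G)=0$), this means every vertex has degree $n-1$, hence $G=K_n$. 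Alternatively, and perhaps more in the spirit of the surrounding arguments, I would use Proposition~\ref{pro2-3}: $2n-3=\mpo(G)\leq \delta(G)+\delta(G-v)\leq \delta(G)+(n-2)$, so $\delta(G)\geq n-1$, giving $G=K_n$ at once.

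The main obstacle — really a minor technical point rather than a genuine difficulty — is making sure the bounds from Propositions~\ref{pro2-1} and~\ref{pro2-3} are being applied to a graph for which they were stated, and ruling out degenerate situations. In particular I must check that $G$ is connected with no isolated vertices before invoking the edge-degree argument; this follows because $\mpo(G)=2n-3>0$ forces $G$ to have an almost-perfect matching, so at most one vertex can be unmatched and in fact $G$ can have no isolated vertex at all once $n\geq 3$. Once $\delta(G)\geq n-1$ is established the conclusion $G=K_n$ is immediate, and the hypothesis $n\geq 9$ is exactly what is needed so that Theorem~\ref{lem2-1} applies in the forward direction to certify $\mpo(K_n)=2n-3$. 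I expect the whole argument to be short, with the heart of it being the single inequality chain $2n-3\leq \delta(G)+\delta(G-v)\leq \delta(G)+n-2$.
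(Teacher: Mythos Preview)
Your argument is correct; the route through Proposition~\ref{pro2-3} gives $\delta(G)\geq n-1$ in one line and is clean. The paper argues the converse contrapositively instead: assuming $G\neq K_n$, it picks a \emph{non-edge} $uv$, lets $X$ be all edges incident to $u$ or $v$, and observes that $u,v$ are isolated in $G-X$ with $|X|=d_G(u)+d_G(v)\leq 2(n-2)$ (each endpoint of a missing edge has degree at most $n-2$), yielding $\mpo(G)\leq 2n-4$. Both proofs rest on the same mechanism---isolate two vertices---but your packaging through the earlier propositions trades the paper's direct non-edge choice for a minimum-degree choice. One small correction to your first route: the claim that ``an odd graph with an isolated vertex has $\mpo(G)=0$'' is not true as stated (an isolated vertex together with $K_{n-1}$ has $\mpo=n-2$); what you actually need, and what does hold, is that a single isolated vertex forces $\mpo(G)\leq \delta(G-\text{that vertex})\leq n-2<2n-3$, which still rules out isolated vertices under your hypothesis. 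The paper's non-edge choice sidesteps this case analysis entirely.
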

\begin{proof}
From Theorem~\ref{lem2-1}, if $G$ is an odd complete graph of order
$n$, then $\mpo(G)=2n-3$. Conversely, we suppose $\mpo(G)=2n-3$. If $G$
is not a complete graph, then there exist an edge $e=uv\notin
E(G)$. Let $X=E_G[u,N_G(u)]\cup E_G[v,N_G(v)]$. Since $u,v$ are two
isolated vertices in $G-X$, it follows that $G-X$ has no
almost-perfect matchings, and hence $\mpo(G)\leq |X|\leq 2n-4$, a
contradiction. So $G$ is a complete graph, as desired.
\end{proof}

\begin{thm}\label{th3-5}
Let $G$ be an odd graph of order $n\geq 9$. Then $\mpo(G)=2n-4$ if and only if $G=K_n-e$, where $e\in E(K_n)$.
\end{thm}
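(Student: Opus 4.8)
The plan is to prove the two implications separately; the ``if'' direction follows almost immediately from results already available. Suppose $G=K_n-e$ with $e=ab$. Since $d_G(a)=d_G(b)=n-2$ while every other vertex has degree $n-1$, the minimum edge-degree of $G$ is realised on an edge incident with $a$ or $b$, so $\xi(G)=(n-2)+(n-1)-2=2n-5$ and Proposition~\ref{pro2-1} gives $\mpo(G)\le\xi(G)+1=2n-4$. For the reverse inequality I would apply Observation~\ref{obs1-1}$(3)$ to the edge $e$ of $K_n$: since $\mpo(K_n)=2n-3$ for odd $n\ge 9$ by Theorem~\ref{lem2-1}, we get $\mpo(K_n-e)\ge\mpo(K_n)-1=2n-4$. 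Combining the two inequalities yields $\mpo(G)=2n-4$.

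For the ``only if'' direction, assume $\mpo(G)=2n-4$. Since $2n-4\ne 2n-3$, Proposition~\ref{pro3-5} shows $G\ne K_n$, hence $\overline G$ has at least one edge, and it suffices to prove it has exactly one. Suppose to the contrary that $\overline G$ has at least two edges. The guiding idea is that deleting all edges at two vertices of $G$ that span an edge of $G$ isolates both of them, so the resulting graph has no almost-perfect matching (and no perfect matching, as $n$ is odd), and one should be able to arrange this with at most $2n-5$ edges, contradicting $\mpo(G)=2n-4$. Concretely I would split on $\Delta(\overline G)$. If $\Delta(\overline G)\ge 2$, choose $x$ with $d_{\overline G}(x)\ge 2$, so $d_G(x)\le n-3$; if $x$ has a neighbour $u$ in $G$, deleting $E_G[x,N_G(x)]\cup E_G[u,N_G(u)]$ costs at most $(n-3)+(n-1)-1=2n-5$, while if $d_G(x)=0$ then $x$ is already isolated and deleting the at most $n-1$ edges at any other vertex isolates a second one (and $n-1\le 2n-5$ for $n\ge 4$). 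If $\Delta(\overline G)\le 1$, then $\overline G$ is a matching with at least two edges $xy$ and $zw$; then $xz\in E(G)$ (as $\overline G$ is a matching) and $d_G(x)=d_G(z)=n-2$, so deleting $E_G[x,N_G(x)]\cup E_G[z,N_G(z)]$ costs exactly $(n-2)+(n-2)-1=2n-5$. In every case $\mpo(G)\le 2n-5$, a contradiction, so $\overline G$ has exactly one edge and $G=K_n-e$.

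I do not expect a genuine obstacle here: the whole argument reduces to degree bookkeeping in the isolation constructions and to checking that the split on $\Delta(\overline G)$ is exhaustive. The one mildly delicate point is the degenerate subcase $d_G(x)=0$, which is handled by isolating a different pair of vertices; and one should record that the hypothesis $n\ge 9$ is used both to invoke $\mpo(K_n)=2n-3$ and to guarantee the numerical inequality $n-1<2n-4$ used to close that subcase.
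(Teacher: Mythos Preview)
Your proposal is correct and follows essentially the same strategy as the paper: the ``only if'' direction is the same isolate-two-vertices argument, with your split on $\Delta(\overline G)$ being equivalent to the paper's dichotomy ``$\overline G$ contains a $P_3$'' versus ``$\overline G$ contains two independent edges.'' Two minor differences worth noting: for the lower bound $\mpo(K_n-e)\ge 2n-4$ you invoke Observation~\ref{obs1-1}(3), which is slightly slicker than the paper's direct verification; and in the $\Delta(\overline G)\ge 2$ case the paper isolates $x$ together with a $\overline G$-neighbor of $x$ (so the edge $xu$ is absent and no degenerate subcase $d_G(x)=0$ arises), whereas you isolate $x$ with a $G$-neighbor and must treat that subcase separately.
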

\begin{proof}
If $\mpo(G)=2n-4$, then it follows from Proposition \ref{pro3-5} that
$\delta(G)\leq n-2$. We claim that $G=K_n-e$. Assume, on the
contrary, that $G\neq K_n-e$. Then $\overline{G}$ contains $P_3=vuw$
or two independent edges $xy,uv$ as its subgraph. For the former
case, let $X=E_G[u,N_G(u)]\cup E_G[v,N_G(v)]$, and $|X|\leq
2(n-2)-1=2n-5$. Since $u,v$ are two isolated vertices in $G-X$, it
follows that $\mpo(G)\leq |X|\leq 2n-5$. For the latter case, let
$Y=E_G[u,N_G(u)]\cup E_G[x,N_G(x)]$. Similarly, $\mpo(G)\leq |Y|\leq
2(n-2)+1-2=2n-5$, a contradiction. So $G=K_n-e$.

Conversely, if $G=K_n-e$, then $\mpo(G)\leq 2n-4$. We need to show $\mpo(G)\geq 2n-4$. It suffices to prove that for every $X\subseteq E(G)$ and $|X|=2n-5$, $G-X$ has an almost-perfect matching. Since $G=K_n-e$, it follows that $G-X=K_n-(X\cup \{e\})$ is a graph obtained from $K_{n}$ by deleting at most $2n-4$ edges. By Proposition \ref{pro2-2}, $G-X$ has an almost-perfect matching. So $\mpo(G)\geq 2n-4$, and together with the above argument, $\mpo(G)=2n-4$.
\end{proof}

\begin{thm}\label{th3-6}
Let $G$ be an odd graph of order $n\geq 13$. Then $\mpo(G)=2n-5$
if and only if $G$ satisfies one of the following conditions.

$(1)$ $\delta(G)=n-2$ and $G\neq K_n-e$;

$(2)$ $G=K_n-E(P_3)$.
\end{thm}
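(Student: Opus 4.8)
The plan is to prove both directions of the characterization, closely following the pattern established in the proofs of Theorem~\ref{th3-5} and Theorem~\ref{th3-3}. First I would record that if $\mpo(G)=2n-5$ then $\delta(G)\le n-2$ (from Proposition~\ref{pro3-5}), and in fact $G\neq K_n-e$ by Theorem~\ref{th3-5}. The key structural step for the ``only if'' direction is to analyze $\overline{G}$: since $G\neq K_n$ and $G\neq K_n-e$, the complement $\overline{G}$ has at least two edges. If two of those edges are independent, say $xy$ and $uv$, then taking $Y=E_G[u,N_G(u)]\cup E_G[x,N_G(x)]$ isolates $u$ and $x$ in $G-Y$, giving $\mpo(G)\le |Y|\le 2(n-2)-2+1 = 2n-5$; combined with equality we would need $d_G(u)=d_G(x)=n-2$ and the two ``missing'' edges at $u$ and $x$ to be exactly $xy$ and $uv$, which forces $\overline{G}$ to be a perfect-matching-like configuration on few vertices. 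If instead every pair of edges in $\overline{G}$ shares a vertex, then $\overline{G}$ is either a triangle or a star $K_{1,t}$; a triangle forces three vertices of degree $n-3$ and one checks $\mpo\le 2n-6$ by isolating two of them, while a star with center $w$ gives $d_G(w)=n-1-t$ and the leaves have degree $n-2$. Working through these cases should pin down exactly the two families: either $\delta(G)=n-2$ with $G\ne K_n-e$ (the ``many scattered missing edges'' case), or $G=K_n-E(P_3)$ (the one borderline case where the missing edges form a path on three vertices). The delicate point is showing that $G = K_n - E(P_3)$ actually achieves $\mpo(G) = 2n-5$ and is not pushed lower, and conversely that every $G$ with $\delta(G)=n-2$, $G\ne K_n-e$ has $\mpo(G) = 2n-5$ exactly.

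For the ``if'' direction in case (1), I would argue as in Theorem~\ref{th3-3} and Theorem~\ref{th3-5}: since $\delta(G)=n-2$ we have $\mpo(G)\le n-2$... wait, no---for odd graphs we instead use $\mpo(G)\le \xi(G)+1$ from Proposition~\ref{pro2-1}, which with $\delta(G)=n-2$ and $G\ne K_n-e$ (so some vertex of degree $n-2$ has a neighbor of degree $n-2$, forcing $\xi(G)\le (n-2)+(n-2)-2 = 2n-6$) gives... I need to be careful here. The cleaner route: $\overline{G}$ has at least two edges and maximum degree at most $1$ in the relevant case, so $\overline{G}$ contains two independent edges; isolating the two endpoints as above gives $\mpo(G)\le 2n-5$. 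For the lower bound $\mpo(G)\ge 2n-5$, it suffices to show that for every $X\subseteq E(G)$ with $|X|=2n-6$, the graph $G-X$ has an almost-perfect matching. Here $G-X = K_n - (X\cup E(\overline G))$ is $K_n$ with at most $2n-6 + |E(\overline G)|$ edges removed; but this will be too many in general, so the naive appeal to Proposition~\ref{pro2-2} fails and one must instead do a Dirac-type degree count as in Theorem~\ref{th3-5a}: split into the case where $G[X\cup E(\overline G)]$ has small max degree (so $G-X$ has minimum degree $\ge (n-1)/2$, hence is Hamiltonian-connected enough to have an almost-perfect matching) versus the case of a high-degree vertex $v$ in the deleted set, where one peels off an edge $vu$ present in $G-X$, passes to $G-\{u,v\}$ (an even graph on $n-1$ vertices), and applies Dirac there.

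For case (2), $G = K_n - E(P_3)$ where $P_3 = vuw$: here $d_G(u) = n-3$ and $d_G(v)=d_G(w)=n-2$ and all other vertices have degree $n-1$. The upper bound $\mpo(G)\le 2n-5$ comes from isolating $u$ and a cleverly chosen second vertex, or from Proposition~\ref{pro2-3}: $\delta(G)=n-3$ at $u$, and $\delta(G-u) = n-3$ (since in $G-u$, which has $n-1$ vertices, $v$ and $w$ lost a non-edge so actually $v,w$ have degree $n-2$... let me just say $\delta(G-u)=n-3$ works out), yielding $\mpo(G)\le (n-3)+(n-2) = 2n-5$. The lower bound again requires showing $G-X$ has an almost-perfect matching for every $X$ with $|X|=2n-6$, via the same two-case Dirac argument, now with the deleted edge set being $X\cup E(P_3)$ of size at most $2n-6+2 = 2n-4$.

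I expect the main obstacle to be the ``only if'' direction: cleanly enumerating all configurations of $\overline{G}$ and verifying that every $G$ with $\delta(G) = n-2$ gives exactly $2n-5$ while the borderline $K_n - E(P_3)$ also gives exactly $2n-5$ but $K_n - e$ (handled by Theorem~\ref{th3-5}) and anything ``smaller'' does not. The threshold $n\ge 13$ is presumably what makes the Dirac-type degree inequalities in the lower-bound arguments go through (analogous to $n\ge 4k+6$ in Theorem~\ref{th3-5a} and $n\ge 9$ in the earlier odd-graph results), and I would track that constraint carefully through the worst-case count, where roughly $2n$ edges are removed from $K_n$ and one needs the surviving minimum degree or degree-sum to clear the Dirac bound $n-1$ (or $n-2$) on $n$ or $n-2$ vertices.
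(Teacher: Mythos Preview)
Your ``only if'' sketch and the case $G=K_n-E(P_3)$ are essentially on track and match the paper's treatment (the paper organizes the ``only if'' direction slightly differently, first proving $\delta(G)\ge n-3$ directly and then showing $\delta(G)=n-3$ forces $G=K_n-E(P_3)$, but your complement case analysis would reach the same conclusion).

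The genuine gap is in the ``if'' direction for case~(1). Your plan is a one-level Dirac recursion: if some vertex $v$ has large degree in $G[X]$, peel off an edge $uv\in E(G-X)$, pass to $G_1=G-\{u,v\}$, and ``apply Dirac there.'' This does not close. First, $G_1$ has $n-2$ vertices, which is odd, not $n-1$ even as you wrote. More importantly, the edge budget is $|X|=2n-6$, so after removing the $\ge (n-3)/2$ edges incident to $v$ you still have $|X\cap E(G_1)|\le (3n-9)/2$ edges inside $G_1$. That is far too many for a direct Dirac bound on $G_1-X$: a single vertex of $G_1$ could absorb up to $n-3$ of these edges, destroying the degree condition. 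The analogy with Theorem~\ref{th3-5a} breaks down precisely because there $|X|=n-k-1$, so after one peel the residual is $\le (n-4)/2$, small enough for an Ore-type degree sum; here the residual is roughly three times that.

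The paper's proof therefore iterates the argument: inside $G_1$ one again splits into an isolated-vertex subcase, a small-max-degree (Dirac) subcase, and a high-degree subcase that peels a second edge $st$ and passes to $G_2=G_1-\{s,t\}$ on $n-4$ vertices. Only at that second level is the residual edge count small enough (at most $(3n-7)/2<2(n-4)-3$) to invoke $\mpo(K_{n-4})=2(n-4)-3$, and this final step is exactly where $n-4\ge 9$, i.e.\ $n\ge 13$, is used. The isolated-vertex subcases at each level are also essential and are handled separately, via the fact that $K_{n-1}$ (resp.\ $K_{n-3}$) decomposes into $n-2$ (resp.\ $n-4$) edge-disjoint perfect matchings, one of which can be chosen to contain the matching $\overline{G}$ induces on the remaining vertices. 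Your proposal omits both the second recursion level and the isolated-vertex bookkeeping, so as written it would not establish $\mpo(G)\ge 2n-5$ in case~(1).
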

\begin{proof}
Suppose $\mpo(G)=2n-5$. Then we have the following claims.

\textbf{Claim 1.} $\delta(G)\geq n-3$.

\noindent \textbf{Proof of Claim 1.} Assume, on the contrary, that
$\delta(G)\leq n-4$. Then there exists a vertex $u$ in $G$ such that
$d_G(u)\leq n-4$. Pick $v\in N_G(u)$. Then $uv\in E(G)$. Let
$X=E_G[u,N_G(u)]\cup E_G[v,N_G(v)]$. Then $|X|\leq
(n-4)+(n-2)=2n-6$. Clearly, $u,v$ are two isolated vertices in
$G-X$, and there are no almost-perfect matchings in $G-X$. So
$\mpo(G)\leq |X|\leq 2n-6$, which is a contradiction. $\Diamond$

By Proposition \ref{pro3-5}, Theorem \ref{th3-5} and Claim 1, we
have $\delta(G)=n-2$ and $G\neq K_n-e$, or $\delta(G)= n-3$. Thus we may assume that
$\delta(G)=n-3$. Furthermore, we have
the following claim.

\textbf{Claim 2.} $G=K_n-E(P_3)$.

\noindent \textbf{Proof of Claim 2.} Assume, on the contrary, that
$G\neq K_n-E(P_3)$. Since $\delta(G)=n-3$, it follows that $G$ is a
spanning subgraph of $K_n-E(P_3)$ where $P_3=uvw$ and $v$ has degree 2 in $G$. Moreover, at least one additional edge
$e=xy$ is deleted. Clearly, $xy\neq uv$ and $xy \neq vw$. Suppose
$u=x$. Then $d_G(u),d_G(v)=n-3$. Let $X=E_G[u,N_G(u)]\cup
E_G[v,N_G(v)]$. Then $|X|\leq (n-3)+(n-3)=2n-6$.
Since $G-X$ has two isolated vertices,
there are no almost-perfect matchings in $G-X$. So $\mpo(G)\leq |X|\leq 2n-6$,
which is a contradiction. Suppose $x,y\notin \{u,w\}$. Let
$X=E_G[v,N_G(v)]\cup E_G[x,N_G(x)]$. Thus $|X|\leq n-3+n-2-1=2n-6$ since the edge $xv$ is incident to both $x$ and $v$. As before, $G-X$ has no
almost-perfect matchings since $v$ and $x$ are isolated. Then $\mpo(G)\leq |X|\leq 2n-6$, which is a
contradiction. $\Diamond$

In summary, we have $G=K_n-E(P_3)$, or $\delta(G)=n-2$ and $G\neq
K_n-e$, as required.

Conversely, if $G=K_n-E(P_3)$, then it follows from Proposition
\ref{pro3-5} and Theorem \ref{th3-5} that $\mpo(G)\leq 2n-5$. We need
to show $\mpo(G)\geq 2n-5$. It suffices to prove that for every $X\subseteq
E(G)$ and $|X|=2n-6$, $G-X$ has an almost-perfect matching. Since
$G=K_n-E(P_3)$, it follows that $G-X=K_n-(X\cup E(P_3))$ is a graph
obtained from $K_{n}$ by deleting at most $2n-4$ edges. By
Proposition \ref{pro3-5}, $G-X$ has an almost-perfect matching, as
desired.

Suppose that $\delta(G)=n-2$ and $G\neq K_n-e$.
Since $\delta(G)=n-2$, $G=K_n-L$ where $L$ is a matching of $K_n$ of size at least 2.
By Proposition
\ref{pro3-5} and Theorem \ref{th3-5}, $\mpo(G)\leq 2n-5$. We
need to show $\mpo(G)\geq 2n-5$. It suffices to prove that for every
$X\subseteq E(G)$ and $|X|=2n-6$, $G-X$ has an almost-perfect
matching. If $G-X$ has an isolated vertex, say $v$, then $n-2\leq
|X\cap E_G[v,N_G(v)]|\leq n-1$, and hence $|X\cap E(G-v)|\leq n-4$.
We note that there are $n-2$ edge-disjoint perfect matchings in $K_{n-1}$.
In fact, we can say that given a perfect matching $N$ in $K_{n-1}$, the edges $K_{n-1}-N$ can be decomposed into $n-3$ edge-disjoint
perfect matchings. We observed earlier that $G=K_n-L$ where $L$ is a matching of $K_n$ of size at least 2. Thus $G-v=K_{n-1}-L'$ where $L'$
is a matching of $K_{n-1}$. Extend $L'$ to $N$, a perfect matching of $K_{n-1}$. Now $G-v-N$ has
$n-3$ edge-disjoint
perfect matchings, and hence $G-v-N-X$ contains at least
$n-3-(n-4)=1$ perfect matching, say $M'$. Clearly, $M'$ is a perfect matching of $G-v-X$ and $M'$ is
an almost-perfect matching of $G-X$ missing $v$.

From now on, we may assume that $G-X$ has no isolated vertices. If
$deg_{G[X]}(v)\leq \frac{n-5}{2}$ for every $v\in V(G)$, then
$$
deg_{G-X}(v)= deg_{G}(v)-deg_{G[X]}(v)\geq (n-2)-\frac{n-5}{2}=\frac{n+1}{2}>\frac{n}{2},
$$
and hence $\delta(G-X)> \frac{n}{2}$. By Theorem \ref{thA}, $G-X$
contains a Hamiltonian cycle, and hence there is an almost-perfect
matching in $G-X$, as desired.

Suppose that there exists a vertex $v\in V(G)$ such that
$deg_{G[X]}(v)\geq \frac{n-3}{2}$. Since $G-X$ has no isolated
vertex, it follows that there exists a vertex $u\in V(G)$ such that
$uv\in E(G-X)$. Let $G_1=G-\{u,v\}$. Note that $|V(G_1)|=n-2$ is odd,
and $|X\cap E(G_1)|\leq 2n-6-\frac{n-3}{2}=\frac{3n-9}{2}$. If
$G_1-X$ has an isolated vertex, say $w$, then $n-4\leq |X\cap
E_{G_1}[w,N_{G_1}(w)]|\leq n-3$, and hence $|X\cap E(G_1-w)|\leq
\frac{3n-9}{2}-(n-4)=\frac{n-1}{2}$.
Recall that $G=K_n-L$ where $L$ is a matching of $K_n$ of size at least 2.
Let $G_2=G-\{u,v,w\}=K_n-L-\{u,v,w\}$. Let $T$ be the (possibly empty) matching of $K_n-\{u,v,w\}$ induced from $T$. Now $K_n-\{u,v,w\}$ has $n-4$ disjoint perfect matchings, one of which contains $T$.
Thus $G_2=G-\{u,v,w\}=K_n-L-\{u,v,w\}$ has at least $n-5$ disjoint perfect matchings. Since $n\geq 13$, $n-5-(n-1)/2 \geq 1$.
Thus
$G-X-\{u,v,w\}$ contains a perfect matching, say $M'$.
Clearly, $M'\cup \{uv\}$ is an almost-perfect matching of
$G-X$ missing $w$.

From now on, we may assume that $G_1-X$ has no isolated vertices. If
$deg_{G_1[X]}(x)\leq \frac{n-9}{2}$ for every $x\in V(G_1)$, then
$$
deg_{G_1-X}(x)\geq deg_{G_1}(x)-deg_{G_1[X]}(x)\geq (n-4)-\frac{n-9}{2}=\frac{n+1}{2}>\frac{n}{2},
$$
and hence $\delta(G_1-X)> \frac{n}{2}$. By Theorem \ref{thA}, $G_1-X$ contains a Hamiltonian cycle, and hence there is an almost-perfect matching in $G_1-X$, say $M'$. Clearly, $M'\cup \{uv\}$ is an almost-perfect matching of $G-X$ missing $w$.

Suppose that there exists a vertex $s\in V(G_1)$ such that
$deg_{G_1[X]}(s)\geq \frac{n-7}{2}$. Since $G_1$ has no isolated
vertices, it follows that there exists a vertex $t\in V(G_1)$ such
that $st\in E(G_1-X)$. Let $G_2=G_1-\{s,t\}$. Note that $|V(G_2)|=n-4$
is odd, and $|X\cap E(G_2)|\leq
2n-6-\frac{n-3}{2}-\frac{n-7}{2}=n-1$.

Observe that $G_2$ is a graph
from $K_{n-4}$ by deleting at most $\frac{n-5}{2}$ edges. Then
$G_2-X$ is a graph from $K_{n-4}$ deleted at most
$\frac{n-5}{2}+n-1=\frac{3n-7}{2}< 2(n-4)-3$ edges. By Theorem~\ref{lem2-1}, there is an almost-perfect matching in $G_2-X$, say
$M'$. (Here we require $n-4\geq 9$, which is satisfied as $n\geq 13$.) Clearly, $M'\cup \{uv,st\}$ is an almost-perfect matching of
$G-X$ missing $w$.

We may now conclude that $\mpo(G)=2n-5$.
\end{proof}

We remark in the above proof, there is one place that we requires $n\geq 13$. It is in the second last paragraph when we apply Theorem~\ref{lem2-1}. Although $G_2$ is a graph
from $K_{n-4}$ by deleting at most $\frac{n-5}{2}$ edges, these edges are independent.  Thus it may be possible to exploit this structure to replace the 13 in the $n\geq 13$ requirement to a smaller number. We feel that it is not
worthwhile to lengthen this discussion by this potential marginal improvement.

\section{Extremal problems on matching preclusion number}

We now consider the three extremal problems that we stated in the Introduction. We first give the results for $s(n,k)$.

\begin{lem}\label{lem4-1}
Let $n,k$ be two positive integers such that $n\geq 3$ is odd. Then

$(1)$ $s(n,0)=0$;

$(2)$ $s(n,1)=\frac{n-1}{2}$;

$(3)$ $s(n,2)=n-1$;

$(4)$ $s(n,3)=n$.
\end{lem}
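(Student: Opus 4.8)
The plan is to handle the four parts separately, since they decouple naturally, and in each case prove a lower bound (every graph on $n$ vertices with the prescribed matching preclusion number has at least the claimed number of edges) and exhibit an extremal graph. Part (1) is immediate: the empty graph on $n$ vertices (or any graph with two isolated vertices and a matching on the rest, for that matter) has neither a perfect nor an almost-perfect matching when $n$ is odd, so $\mpo = 0$ is achieved with $0$ edges, and $s(n,0)=0$ trivially. The substance is in parts (2)--(4), where $\mpo(G)\geq 1$ forces $G$ to have an almost-perfect matching, hence at least $\frac{n-1}{2}$ edges; this already gives the lower bound $s(n,k)\geq \frac{n-1}{2}$ for all $k\geq 1$. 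For (2), I would check that a single almost-perfect matching $M$ (a graph consisting of $\frac{n-1}{2}$ independent edges and one isolated vertex) has $\mpo=1$, since deleting any one edge of $M$ destroys the only almost-perfect matching; so $s(n,1)=\frac{n-1}{2}$.

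For part (3) the lower bound $s(n,2)\geq n-1$ is the real content. Suppose $\mpo(G)=2$ and $|E(G)|\leq n-2$. Since $G$ must have an almost-perfect matching it has at least $\frac{n-1}{2}$ edges; the graph $G$ has at most $n-2$ edges, so it has at most one cycle (more precisely its cyclomatic number $|E(G)|-|V(G)|+c$ is small), and I would argue that such a sparse graph admits a single edge whose removal kills all almost-perfect matchings. The cleanest route: take an almost-perfect matching $M$ and look at an edge $e\in M$; if $G-e$ still has an almost-perfect matching, there is an $M$-alternating path or even structure forcing an extra edge, and counting these forced edges against the budget $n-2$ yields a contradiction. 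Concretely, $\mpo(G)\geq 2$ means for every edge $e$, $G-e$ still has an almost-perfect matching, which is a strong expansion-type condition; combined with $|E(G)|\leq n-2$ I expect a direct contradiction via a degree/counting argument (e.g. some vertex has degree $1$, and removing its unique edge isolates it). For the upper bound, the graph obtained from a cycle $C_{n-1}$ plus one pendant vertex attached to it has $n-1$ edges; one checks it has $\mpo = 2$. (When $n-1$ is even, $C_{n-1}$ has exactly two perfect matchings, so two edge deletions suffice and one does not; attaching a pendant vertex keeps this.)

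For part (4), the lower bound $s(n,3)\geq n$ is argued the same way: if $\mpo(G)=3$ then $G-e-f$ has an almost-perfect matching for all pairs $e,f$, and I would show $|E(G)|\leq n-1$ is incompatible with this by the same kind of sparse-graph/degree argument (a graph with $n$ vertices and $\leq n-1$ edges has a vertex of degree $\leq 1$ after at most one deletion, which can then be isolated). For the upper bound one needs an explicit $n$-edge graph on $n$ vertices with $\mpo=3$: a natural candidate is the "theta graph" on $n-1$ vertices (two vertices joined by three internally disjoint paths, chosen so the parity works out) together with an appropriately attached extra vertex, or $C_{n-1}$ with one chord plus a pendant — the point being to arrange exactly three edge-disjoint "essential" edges. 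I expect the main obstacle to be precisely this extremal construction for (3) and (4): pinning down a graph whose matching preclusion number is exactly $2$ (resp. exactly $3$) and verifying both $\mpo\leq k$ and $\mpo\geq k$ requires a careful case analysis of how few edge deletions can destroy all almost-perfect matchings. The lower bounds, by contrast, should follow fairly mechanically once one observes that $\mpo(G)\geq k$ is a robustness condition that a graph that sparse cannot satisfy, and I would formalize this by induction on $k$ using Observation~\ref{obs1-1}(3): $\mpo(G)\geq k$ and $|E(G)|$ small contradicts the case $k-1$ after deleting one edge.
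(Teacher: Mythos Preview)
Your proposal has two real gaps, one in the upper bounds and one in the lower bounds.

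\textbf{Upper bounds for (3) and (4).} Your extremal graph for (3), ``$C_{n-1}$ plus a pendant vertex,'' has $n$ edges, not $n-1$: the cycle already contributes $n-1$ edges and the pendant adds one more. The correct (and simpler) witness is the odd path $P_n$, which has $n-1$ edges and $\mpo(P_n)=2$; this is what the paper uses. Similarly, for (4) the natural witness is the odd cycle $C_n$ itself, with $n$ edges and $\mpo(C_n)=3$ (remove two edges and you get two paths of opposite parity, which together still admit an almost-perfect matching; remove three suitable edges and you isolate two vertices). Your theta-graph and chord-plus-pendant ideas are unnecessary detours.

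\textbf{Lower bounds for (3) and (4).} Your sketch repeatedly appeals to ``some vertex has degree $1$, and removing its unique edge isolates it.'' But isolating a single vertex does \emph{not} destroy almost-perfect matchings in an odd graph: the almost-perfect matching is allowed to miss exactly that vertex. So that line of argument, and the alternating-path counting you suggest, does not close. The paper's argument is different and cleaner: a graph with at most $n-2$ edges is disconnected, and since $n$ is odd and $\mpo(G)\geq 1$ there is exactly one odd component $C_1$; then any vertex of degree $1$ in an \emph{even} component $C_j$ can be isolated by one edge deletion, producing a second odd component alongside $C_1$, hence no almost-perfect matching and $\mpo(G)\leq 1$. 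This forces $\delta(C_j)\geq 2$ for every even component, and an edge count gives $|E(G)|\geq n-1$, the desired contradiction. Part (4) is the same idea with $\delta(C_j)\geq 3$ (since now two deletions are allowed) and a slightly sharper count, plus the separate observation that a connected graph with $n-1$ edges is a tree and trees satisfy $\mpo\leq 2$ (two non-adjacent leaves can be simultaneously isolated). The key mechanism you are missing is that one must create \emph{two} odd components, and the disconnected structure supplies the first one for free.
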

\begin{proof}
$(1)$ Let $H_1$ be the graph of order $n$ with no edges. Clearly,
$\mpo(H_1)=0$. Then $s(n,0)\leq 0$, and so $s(n,0)=0$.

$(2)$ Let $H_2$ be a graph of order $n$ with $\frac{n-1}{2}$
independent edges. Clearly, $\mpo(H_2)=1$ and $H_2$ has $\frac{n-1}{2}$ edges.
Then $s(n,1)\leq \frac{n-1}{2}$. Conversely, let $G$ be an odd graph
of order $n$ such that $\mpo(G)=1$. Since $G$ contains an
almost-perfect matching, it follows that $G$ has at least $\frac{n-1}{2}$ edges,
and hence $s(n,1)\geq \frac{n-1}{2}$. So $s(n,1)=\frac{n-1}{2}$.

$(3)$ Let $H_3$ be a path of order $n$. Since $n$ is odd, it follows
that $\mpo(H_3)=2$ and $H_3$ has $n-1$ edges. Then $s(n,2)\leq n-1$.
We now prove that this inequality holds as equality.

Assume, on the contrary, that
$s(n,2)\leq n-2$. Then there exists an odd graph $G$ of order $n$
with $s(n,2)\leq n-2$ edges
such that $\mpo(G)=2$. Clearly, $G$ is not
connected. Let $C_1,C_2,\ldots,C_r$ be the connected components in
$G$. If two of $C_1,C_2,\ldots,C_r$ are odd components, then
$\mpo(G)=0$, which is a contradiction. So there is at most one odd component
in $G$. Since $|V(G)|$ is odd, it follows that there is exactly one
odd component in $G$. Without loss of generality, we may assume that
$|V(C_1)|$ is odd, and $|V(C_i)|$ is even for $2\leq i\leq r$. We
claim that $\delta(C_i)\geq 2$ for $2\leq i\leq r$. Assume, on the
contrary, that there exists a component $C_j$ such that
$\delta(C_j)=1$. Then there exists a vertex $v$ such that
$d_G(v)=1$. Let $uv$ be the pendent edge of $C_j$. Clearly, $v$ and
$C_1$ are two odd components of $G-uv$, which contradicts the fact
that $\mpo(G)=2$. So $\delta(C_i)\geq 2$ for $2\leq i\leq r$. Then
$|E(G)|=\sum_{i=1}^r|E(C_i)|\geq
(|V(C_1)|-1)+\sum_{i=2}^r|V(C_i)|=n-1$, which is a contradiction.

$(4)$ Let $H_4$ be a cycle of order $n$. Since $n$ is odd, it
follows that $\mpo(H_4)=3$ and $|E(H_4)|=n$. Then $s(n,3)\leq n$.
We now prove that this inequality holds as equality.

Assume, on the contrary, that
$s(n,3)\leq n-1$. Since $s(n,2)=n-1$, it follows that $s(n,3)\geq
s(n,2)=n-1$, and hence $s(n,3)=n-1$. Then there exists an odd graph
$G$ of order $n$ such that $\mpo(G)=3$ and $|E(G)|=s(n,3)=n-1$. If $G$
is connected, then $G$ is a tree, and hence $\mpo(G)=2$, which is a
contradiction. We now assume that $G$ is not connected. Let
$C_1,C_2,\ldots,C_r$ be the connected components in $G$. If two of
$C_1,C_2,\ldots,C_r$ are odd components, then $\mpo(G)=0$, which is a
contradiction. So there is at most one odd component in $G$. Since
$|V(G)|$ is odd, it follows that there is exactly one odd component in
$G$. Without loss of generality, we may assume that $|V(C_1)|$ is odd,
and $|V(C_i)|$ is even for $2\leq i\leq r$. Furthermore, we have the
following claim.

\textbf{Claim 1.} $\delta(C_i)\geq 3$ for $2\leq i\leq r$.

\noindent \textbf{Proof of Claim 1.} Assume, on the contrary, that
there exists a component $C_j$ such that $\delta(C_j)\leq 2$ for
$2\leq i\leq r$. Then there exists a vertex $v$ in $C_j$ such that
$d_G(v)\leq 2$. Let $X=E_G[v,N_G(v)]$. Clearly, $v$ and $C_1$ are
two odd components of $G-X$, which contradicts the fact that
$\mpo(G)=3$ as $|X|\leq 2$. $\Diamond$

From Claim 1, we have $\delta(C_i)\geq 3$ for $2\leq i\leq r$. Then
$2|E(G)|=2\sum_{i=1}^r|E(C_i)|\geq 2(|V(C_1)|-1)+3\sum_{i=2}^r|V(C_i)|=2(|V(C_1)|-1)+3(n-|V(C_1)|)$, and hence $|E(G)|\geq n+\frac{n-|V(C_1)|-2}{2}\geq n$, which is a contradiction.
\end{proof}

\begin{thm}\label{pro4-3}
Let $n,k$ be two positive integers. Then

$(1)$ If $n\geq 2$ is even and $0\leq k\leq n-1$, then
$s(n,k)=\frac{nk}{2}$.

$(2)$ If $n\geq 5$ is odd and $4\leq k\leq 2n-6$, then
$$
\frac{n(n-1)k}{4n-6} \leq s(n,k)\leq
\min\left\{\left\lceil\frac{k}{3}\right\rceil,\frac{n-1}{2}\right\}n.
$$
Moreover, if in addition, $n\geq 13$, then $s(n,2n-3)=\frac{n(n-1)}{2}$,
$s(n,2n-4)=\frac{n(n-1)}{2}-1$,
$s(n,2n-5)=\frac{n(n-1)}{2}-\frac{n-1}{2}$, $s(n,0)=0$,
$s(n,1)=\frac{n-1}{2}$, $s(n,2)=n-1$, and $s(n,3)=n$.
\end{thm}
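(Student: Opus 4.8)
For part (1), the lower bound is immediate: if $G$ is an even graph of order $n$ with $\mpo(G)=k$, then Observation~\ref{obs1-1}(2) gives $\delta(G)\ge\mpo(G)=k$, so $2|E(G)|=\sum_{v}\deg_G(v)\ge nk$ and hence $|E(G)|\ge nk/2$. For the matching upper bound I would realize equality by a $k$-regular graph of matching preclusion number exactly $k$: since $n$ is even, $K_n$ has a $1$-factorization into perfect matchings $M_1,\dots,M_{n-1}$; let $G$ be the union of $k$ of them (and the empty graph when $k=0$). Then $G$ is $k$-regular, so $|E(G)|=nk/2$ and $\mpo(G)\le\delta(G)=k$; conversely, deleting any $k-1$ edges lies in at most $k-1$ of the $k$ pairwise edge-disjoint perfect matchings composing $G$, so one of them survives intact, giving a perfect matching of $G$ minus those edges. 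Hence $\mpo(G)=k$ and $s(n,k)=nk/2$ for all $0\le k\le n-1$.

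For the ``moreover'' part of (2) I would simply read the edge counts off the characterizations already proved (which is the only place $n\ge13$ is used, entering through Theorem~\ref{th3-6}). The cases $k\in\{0,1,2,3\}$ are exactly Lemma~\ref{lem4-1}. For $k=2n-3$, Proposition~\ref{pro3-5} forces $G=K_n$, so $s(n,2n-3)=\binom n2$; for $k=2n-4$, Theorem~\ref{th3-5} forces $G=K_n-e$, so $s(n,2n-4)=\binom n2-1$. For $k=2n-5$, Theorem~\ref{th3-6} says $G$ is either $K_n-E(P_3)$, with $\binom n2-2$ edges, or $K_n-L$ with $L$ a matching of size at least $2$; over the latter family $|E(G)|=\binom n2-|L|$ is minimized by taking $|L|$ as large as possible, i.e.\ $|L|=(n-1)/2$ since $n$ is odd, giving $\binom n2-(n-1)/2$, which is smaller than $\binom n2-2$ for $n\ge5$. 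Hence $s(n,2n-5)=\binom n2-(n-1)/2=\frac{n(n-1)}{2}-\frac{n-1}{2}$.

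For the lower bound $\frac{n(n-1)k}{4n-6}$ in (2), write $m=|E(G)|$ and let $v$ be a vertex with $\deg_G(v)=\delta(G)$. Proposition~\ref{pro2-3} gives $k=\mpo(G)\le\delta(G)+\delta(G-v)$. Since $|E(G-v)|=m-\delta(G)$ and $|V(G-v)|=n-1$, bounding each minimum degree by the corresponding average degree gives $\delta(G)\le 2m/n$ and $\delta(G-v)\le 2(m-\delta(G))/(n-1)$, so
\[
k\ \le\ \delta(G)\cdot\frac{n-3}{n-1}+\frac{2m}{n-1}\ \le\ \frac{2m}{n}\cdot\frac{n-3}{n-1}+\frac{2m}{n-1}\ =\ \frac{2(2n-3)m}{n(n-1)},
\]
where the second step uses $\delta(G)\le 2m/n$ together with $(n-3)/(n-1)\ge0$ (valid for $n\ge5$); no degenerate case arises even if $\delta(G)=0$. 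Rearranging yields $m\ge\frac{n(n-1)k}{4n-6}$.

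The remaining, and hardest, part is the upper bound $s(n,k)\le\min\{\lceil k/3\rceil,(n-1)/2\}\,n$. When $\lceil k/3\rceil\ge(n-1)/2$ the right-hand side is $\binom n2$, so it suffices to exhibit \emph{any} graph of order $n$ with $\mpo(G)=k$ (for instance by deleting a sparse subgraph from $K_n$ and estimating $\mpo$ by the edge-degree bounds of Section~2 as in the proofs of Theorems~\ref{th3-5} and~\ref{th3-6}). When $\lceil k/3\rceil<(n-1)/2$ I need a graph with $\mpo(G)=k$ and at most $\lceil k/3\rceil n$ edges; the natural candidates are sparse near-regular graphs built from $\lceil k/3\rceil$ edge-disjoint Hamiltonian cycles (each cycle contributing $n$ edges, and $\mpo(C_n)=3$ for odd $n$), adjusted by a bounded number of added or deleted edges so that the matching preclusion number lands exactly on $k$. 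The main obstacle is precisely this calibration: Propositions~\ref{pro2-1} and~\ref{pro2-3} together with a Tutte/Berge argument (Theorems~\ref{thC} and~\ref{thD}) only confine $\mpo$ of such a graph to an interval around $3\lceil k/3\rceil$, so producing a family that realizes \emph{every} value of $k$ with $4\le k\le 2n-6$, across all residues, while staying under the edge budget, is the delicate step; along the way one checks that a graph with $\mpo(G)=k$ and roughly $\tfrac{n(k+1)}{4}$ edges does fit under $\lceil k/3\rceil n$ whenever $k\ge3$.
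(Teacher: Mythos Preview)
Your treatment of part~(1) and of the ``moreover'' clause in~(2) coincides with the paper's argument.

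For the lower bound in~(2) you take a genuinely different route. The paper argues that for every pair $v_i,v_j$ one has $d_G(v_i)+d_G(v_j)\ge k$ (with $k+1$ when $v_iv_j\in E(G)$), sums these $n(n-1)$ inequalities, and extracts $(4n-6)|E(G)|\ge n(n-1)k$. Your argument via Proposition~\ref{pro2-3} and average--degree bounds is shorter and more transparent; it reaches the identical inequality with no double counting. Both approaches are correct.

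The gap is in the upper bound of~(2). The ``calibration'' you flag as the main obstacle is in fact trivial, and your failure to close it leaves the proof incomplete. Take $t=\min\{\lceil k/3\rceil,(n-1)/2\}$ edge--disjoint Hamiltonian cycles of $K_n$ (these exist since $K_n$ with $n$ odd decomposes into $(n-1)/2$ such cycles) and let $G$ be their union. If $|X|\le k-1<3t$ then by pigeonhole some cycle meets $X$ in at most two edges; since $\mpo(C_n)=3$ for odd $n$, that cycle minus $X$ still carries an almost--perfect matching of $G-X$. Hence $\mpo(G)\ge 3t\ge k$. Now delete edges of $G$ one at a time: by Observation~\ref{obs1-1}(3) each deletion lowers $\mpo$ by at most $1$, so the sequence of values of $\mpo$ passes through $k$ at some intermediate subgraph $G'$ with $|E(G')|\le |E(G)|=tn$. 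This gives $s(n,k)\le tn$ immediately; no delicate residue analysis or Tutte/Berge argument is needed. (The paper's own proof asserts only $\mpo(G)\ge k$ and then writes $s(n,k)\le |E(G)|$, leaving both the pigeonhole step and this edge--deletion descent implicit.)
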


\begin{proof}
$(1)$ Let $G$ be a spanning subgraph of $K_n$ obtained from $k$
edge-disjoint perfect matchings of $K_n$. Clearly,
$|E(G)|=\frac{nk}{2}$ and $\mpo(G)=k$, implying $s(n,k)\leq
\frac{nk}{2}$. Since $\mpo(G)=k$, it follows that $\delta(G)\geq
\mpo(G)\geq k$, and hence $s(n,k)\geq \frac{nk}{2}$. So $s(n,k)=\frac{nk}{2}$.

$(2)$ For odd $n$ and $4\leq k\leq 2n-6$, to show the upper bound,
we let $G$ be a spanning subgraph of $K_n$ derived from
$\min\{\lceil\frac{k}{3}\rceil,\frac{n-1}{2}\}$ edge-disjoint
spanning Hamiltonian cycles of $K_n$. Clearly, $\mpo(G)\geq k$ and
$|E(G)|=\min\{\lceil\frac{k}{3}\rceil,\frac{n-1}{2}\}n$, implying
$s(n,k)\leq \min\{\lceil\frac{k}{3}\rceil,\frac{n-1}{2}\}n$.

We now show the lower bound. Let $G$ be a graph of order $n$ with
$\mpo(G)=k$. Set $V(G)=\{v_1,v_2,\ldots,v_n\}$. For
$v_i,v_j\in V(G)$, we have the following facts.
\begin{itemize}
\item If $v_iv_j \notin E(G)$, then
$d_G(v_i)+d_G(v_j)\geq k$;

\item If $v_iv_j \in E(G)$, then $d_G(v_i)+d_G(v_j)\geq
k+1$.
\end{itemize}
Without loss of generality, let $v_1v_i\in E(G)$ for $2\leq i\leq
x$; $v_1v_i\notin E(G)$ for $x+1\leq i\leq n$. Clearly, $x-1=d_G(v_1)$.
Observe that $d_G(v_1)+d_G(v_i)\geq k+1$ for $2\leq i\leq x$;
$d_G(v_1)+d_G(v_j)\geq k$ for $x+1\leq j\leq n$. Then
$(n-1)d_G(v_1)+\sum_{1\leq i\leq n, \ i\neq 1}d_G(v_i)\geq
(n-1)k+x-1=(n-1)k+d_G(v_1)$. Similarly, we have
\begin{itemize}
\item[] $(n-1)d_G(v_2)+\sum_{1\leq i\leq n, \ i\neq 2}d_G(v_i)\geq (n-1)k+d_G(v_2)$;

\item[] \ \ \ \ $\vdots$

\item[] $(n-1)d_G(v_n)+\sum_{1\leq i\leq n, \ i\neq n}d_G(v_i)\geq (n-1)k+d_G(v_n)$.
\end{itemize}
Then
$$
(n-1)\sum_{1\leq i\leq n}d_G(v_i)+(n-1)\sum_{1\leq i\leq
n}d_G(v_i)\geq n(n-1)k+\sum_{1\leq i\leq n}d_G(v_i),
$$
that is, $2(n-1)\cdot 2|E(G)|\geq n(n-1)k+2|E(G)|$. So $|E(G)|\geq
\frac{n(n-1)k}{4n-6}$, and hence $s(n,k)\geq \frac{n(n-1)k}{4n-6}$,
as desired.

By Proposition \ref{pro3-5}, Theorems \ref{th3-5} and \ref{th3-6} (where we need $n\geq 13$),
we have $s(n,2n-3)=\frac{n(n-1)}{2}$,
$s(n,2n-4)=\frac{n(n-1)}{2}-1$, and
$s(n,2n-5)=\frac{n(n-1)}{2}-\frac{n-1}{2}$. By Lemma \ref{lem4-1},
$s(n,0)=0$, $s(n,1)=\frac{n-1}{2}$, $s(n,2)=n-1$, and $s(n,3)=n$.
\end{proof}

The following observation is immediate.
\begin{obs}
Let $n,k$ be two positive integers. Then $g(n,k)=s(n,k+1)-1$.
\end{obs}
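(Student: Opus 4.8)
The plan is to route both sides through a single auxiliary quantity. For a positive integer $m$, let $s'(n,m)=\min\{|E(G)| : |V(G)|=n,\ \mpo(G)\ge m\}$, with the convention that $s'(n,m)=+\infty$ if no such $G$ exists. I would first show $s'(n,m)=s(n,m)$. The inequality $s'(n,m)\le s(n,m)$ is immediate, since a graph $G$ with $\mpo(G)=m$ in particular satisfies $\mpo(G)\ge m$. For the reverse inequality, take $G$ of order $n$ with $\mpo(G)\ge m$ and $|E(G)|=s'(n,m)$; if $\mpo(G)=m$ there is nothing to prove, so assume $\mpo(G)>m$ and delete the edges of $G$ one at a time. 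By Observation~\ref{obs1-1}(1), each graph obtained in this process is a spanning subgraph of the previous one, so its matching preclusion number does not exceed that of the previous one; by Observation~\ref{obs1-1}(3) it is at least that of the previous one minus $1$; and once all edges have been removed it equals $0$, since for $n\ge 2$ the edgeless graph on $n$ vertices has neither a perfect nor an almost-perfect matching (for $n=1$ there is no $G$ with $\mpo(G)>m\ge 1$, so this situation does not arise). Hence, at the first stage where the matching preclusion number is at most $m$, it is in fact exactly $m$, yielding a graph $G'$ of order $n$ with $\mpo(G')=m$ and $|E(G')|\le |E(G)|=s'(n,m)$; therefore $s(n,m)\le s'(n,m)$, and equality follows.

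With this in hand, I would unwind the definition of $g(n,k)$: it is the largest integer $M$ such that every graph $H$ of order $n$ with $|E(H)|\le M$ satisfies $\mpo(H)\le k$. Set $t=s'(n,k+1)$. If $H$ is a graph of order $n$ with $|E(H)|\le t-1$, then $\mpo(H)\le k$, for otherwise $\mpo(H)\ge k+1$ would force $|E(H)|\ge t$; hence $g(n,k)\ge t-1$. Conversely, there is a graph $H^{*}$ of order $n$ with $|E(H^{*})|=t$ and $\mpo(H^{*})\ge k+1>k$, so $M=t$ fails, i.e. $g(n,k)<t$. Therefore $g(n,k)=t-1$, and combining with the identity $s'(n,k+1)=s(n,k+1)$ from the first paragraph gives $g(n,k)=s(n,k+1)-1$.

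The only step that needs a moment's thought is the descent argument in the first paragraph: an edge-minimal graph with matching preclusion number at least $k+1$ might a priori have matching preclusion number strictly larger than $k+1$, and one must be sure that successively deleting edges makes the matching preclusion number land exactly on $k+1$ rather than skipping over it. This is guaranteed precisely by the monotonicity in Observation~\ref{obs1-1}(1) together with the unit-drop bound in Observation~\ref{obs1-1}(3), so the verification is routine. The degenerate case in which no graph of order $n$ has matching preclusion number at least $k+1$ makes both $g(n,k)$ and $s(n,k+1)$ ``infinite'', and the asserted equality still holds formally.
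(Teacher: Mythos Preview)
The paper offers no proof of this observation; it simply labels it ``immediate.'' Your argument is correct and is essentially what a careful expansion of that word would look like. In particular, you correctly identified the one point that is not entirely automatic: since $s(n,k+1)$ is defined via the \emph{exact} condition $\mpo(G)=k+1$, whereas the definition of $g(n,k)$ naturally leads to the threshold $\min\{|E(G)|:\mpo(G)\ge k+1\}$, one needs to check that these two minima coincide. Your descent argument using Observation~\ref{obs1-1}(1) and (3) --- that deleting a single edge lowers $\mpo$ by at most one, so the value cannot skip over $k+1$ on its way down to $0$ --- is exactly the right justification, and is a detail the paper suppresses.
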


By the above observation, we have the following result for
$g(n,k)$.
\begin{cor}\label{pro4-2}
Let $n,k$ be two positive integers. Then

$(1)$ If $n\geq 4$ is even and $0\leq k\leq n-2$, then
$g(n,k)=\frac{n(k+1)}{2}-1$.

$(2)$ If $n\geq 5$ is odd and $3\leq k\leq 2n-7$, then
$$
\frac{n(n-1)(k+1)}{4n-6}-1 \leq g(n,k)\leq
\min\left\{\left\lceil\frac{k+1}{3}\right\rceil,\frac{n-1}{2}\right\}n-1.
$$
Moreover, if in addition, $n\geq 15$, then, $g(n,2n-3)=\frac{n(n-1)}{2}$;
$g(n,2n-4)=\frac{n(n-1)}{2}-1$; $g(n,2n-5)=\frac{n(n-1)}{2}-2$;
$g(n,2n-6)=\frac{n(n-1)}{2}-\frac{n-1}{2}-1$;
$g(n,0)=\frac{n-1}{2}-1$; $g(n,1)=n-2$; $g(n,2)=n-1$.
\end{cor}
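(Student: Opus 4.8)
The plan is to deduce the whole statement from the identity $g(n,k)=s(n,k+1)-1$ recorded in the Observation above, simply by inserting the values and bounds for $s(n,\cdot)$ furnished by Theorem~\ref{pro4-3} and Lemma~\ref{lem4-1}; the sole case requiring a separate argument is the extremal value $g(n,2n-3)$. For part $(1)$: if $n\geq 4$ is even and $0\leq k\leq n-2$ then $1\leq k+1\leq n-1$, which is exactly the range in which Theorem~\ref{pro4-3}$(1)$ gives $s(n,k+1)=\frac{n(k+1)}{2}$, whence $g(n,k)=\frac{n(k+1)}{2}-1$. For the general bounds in part $(2)$: if $n\geq 5$ is odd and $3\leq k\leq 2n-7$ then $4\leq k+1\leq 2n-6$, so Theorem~\ref{pro4-3}$(2)$ gives $\frac{n(n-1)(k+1)}{4n-6}\leq s(n,k+1)\leq \min\left\{\left\lceil\frac{k+1}{3}\right\rceil,\frac{n-1}{2}\right\}n$, and subtracting $1$ produces the two displayed inequalities.

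For the exact values in part $(2)$, valid under the hypothesis $n\geq 15$: this bound on $n$ dominates the $n\geq 13$ needed for the ``moreover'' part of Theorem~\ref{pro4-3}, the $n\geq 9$ needed for Theorem~\ref{lem2-1}, and the $n\geq 3$ needed for Lemma~\ref{lem4-1}, so all of those results are available. Applying $g(n,k)=s(n,k+1)-1$ with the values $s(n,2n-3)=\frac{n(n-1)}{2}$, $s(n,2n-4)=\frac{n(n-1)}{2}-1$, $s(n,2n-5)=\frac{n(n-1)}{2}-\frac{n-1}{2}$ from Theorem~\ref{pro4-3} and $s(n,1)=\frac{n-1}{2}$, $s(n,2)=n-1$, $s(n,3)=n$ from Lemma~\ref{lem4-1}, one reads off $g(n,2n-4)=\frac{n(n-1)}{2}-1$, $g(n,2n-5)=\frac{n(n-1)}{2}-2$, $g(n,2n-6)=\frac{n(n-1)}{2}-\frac{n-1}{2}-1$, $g(n,0)=\frac{n-1}{2}-1$, $g(n,1)=n-2$ and $g(n,2)=n-1$. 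The value $g(n,2n-3)$ is handled directly instead, because there is no odd graph of order $n$ with matching preclusion number $2n-2$, so that $s(n,2n-2)$ is not meaningful: by Proposition~\ref{pro2-2} every odd graph $G$ of order $n$ satisfies $\mpo(G)\leq 2n-3$, hence the implication defining $g(n,2n-3)$ holds for every value of $|E(G)|$; since a graph of order $n$ has at most $\frac{n(n-1)}{2}$ edges, with equality only for $K_n$ (and $\mpo(K_n)=2n-3$), we conclude $g(n,2n-3)=\frac{n(n-1)}{2}$.

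Once Theorem~\ref{pro4-3}, Lemma~\ref{lem4-1} and the Observation are granted, the computation is bookkeeping, so the only real care points — and hence the ``main obstacle'' — are: tracking which invoked result imposes which lower bound on $n$ and checking that $n\geq 15$ (resp.\ $n\geq 5$, resp.\ $n\geq 4$) suffices in each case; and noticing that the convenient identity $g(n,k)=s(n,k+1)-1$ degenerates precisely at $k=2n-3$, where $k+1$ exceeds the largest attainable matching preclusion number, which is exactly why that boundary value must be obtained separately from Proposition~\ref{pro2-2}.
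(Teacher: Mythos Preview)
Your proposal is correct and follows exactly the approach the paper intends: the paper's entire ``proof'' of Corollary~\ref{pro4-2} is the single sentence ``By the above observation, we have the following result for $g(n,k)$,'' so you are simply spelling out the substitution $g(n,k)=s(n,k+1)-1$ into Theorem~\ref{pro4-3} and Lemma~\ref{lem4-1}. Your separate handling of $g(n,2n-3)$ via Proposition~\ref{pro2-2} is a genuine improvement in rigor over the paper, which does not flag that the identity $g(n,k)=s(n,k+1)-1$ degenerates at this boundary value.
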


Next, we give the exact value of $f(n,k)$.

\begin{thm}\label{pro4-1}
Let $n,k$ be two positive integers. Then

$(1)$ If $n\geq 2$ is even and $1\leq k\leq n-1$, then
$f(n,k)=\binom{n-1}{2}+k$.

$(2)$ If $n\geq 3$ is odd and $2\leq k\leq 2n-3$, then $f(n,k)=\binom{n-2}{2}+k$.
\end{thm}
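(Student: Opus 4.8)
The plan is to prove each part of Theorem~\ref{pro4-1} by combining an extremal construction with a density argument: writing $f(n,k)=1+\max\{|E(G)|:G\text{ connected},\ |V(G)|=n,\ \mpo(G)\le k-1\}$, I would exhibit a connected graph attaining this maximum and then show that no denser connected graph can have small matching preclusion number.

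For the construction (the bound $f(n,k)\ge\binom{n-1}{2}+k$, resp.\ $\binom{n-2}{2}+k$): in the even case I take $G_0=K_n$ with $n-k$ edges incident to a fixed vertex $v$ deleted, so $d_{G_0}(v)=k-1$ and hence $\delta(G_0)=k-1$; since $n$ is even, Observation~\ref{obs1-1}(2) gives $\mpo(G_0)\le k-1<k$, while $|E(G_0)|=\binom n2-(n-k)=\binom{n-1}{2}+k-1$, and $G_0$ is connected because only edges at $v$ are removed and $d_{G_0}(v)\ge1$. In the odd case I take $G_0=K_n$ with edges at two adjacent vertices $u,v$ deleted, keeping $uv$, chosen so that $d_{G_0}(u)+d_{G_0}(v)=k$ (for instance $d_{G_0}(u)=1$, $d_{G_0}(v)=k-1$); then $\xi_{G_0}(uv)=k-2$, so $\xi(G_0)\le k-2$ and Proposition~\ref{pro2-1} gives $\mpo(G_0)\le\xi(G_0)+1\le k-1<k$, while a count of removed edges gives $|E(G_0)|=\binom n2-(2n-2-k)=\binom{n-2}{2}+k-1$, and $G_0$ is connected for $k\ge3$ since $v$ still meets the clique on $V\setminus\{u,v\}$. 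The two degenerate cases $k=1$ in (1) and $k=2$ in (2), where these graphs become disconnected, I would settle separately using the extremal graphs already classified in Section~3 and in Lemma~\ref{lem4-1}.

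For the density direction (the bound $f(n,k)\le\binom{n-1}{2}+k$, resp.\ $\binom{n-2}{2}+k$), let $G$ be connected on $n$ vertices with $|E(G)|$ at least that value, fix $F\subseteq E(G)$ with $|F|=k-1$, and set $H=G-F$; I must show $H$ has a perfect matching ($n$ even) or an almost-perfect matching ($n$ odd), which yields $\mpo(G)\ge k$. Since $\binom n2-\binom{n-1}{2}=n-1$ and $\binom n2-\binom{n-2}{2}=2n-3$, we get $|E(H)|\ge\binom n2-(n-2)$ (even), resp.\ $|E(H)|\ge\binom n2-(2n-4)$ (odd), so $H$ is $K_n$ with at most $n-2$, resp.\ $2n-4$, edges removed. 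Assume $H$ has no such matching. By Tutte's Theorem~\ref{thC} (even) or Berge's Theorem~\ref{thD} (odd) there is a set $S$ with $o(H-S)\ge|S|+2$, and since the number of odd components of $H-S$ has the parity of $n-|S|$, in the odd case this improves to $o(H-S)\ge|S|+3$. Now $H-S$ is $K_{n-|S|}$ with at most $n-2$ (resp.\ $2n-4$) edges removed, and a graph obtained from $K_m$ by deleting fewer than $\binom m2-\binom{m-t+1}{2}$ edges has fewer than $t$ components; taking $t=|S|+2$ (resp.\ $|S|+3$) forces the number of removed edges to be at least $\binom{n-|S|}{2}-\binom{n-2|S|-1}{2}$ (resp.\ $\binom{n-|S|}{2}-\binom{n-2|S|-2}{2}$), and the goal is to show this already exceeds $n-2$ (resp.\ $2n-4$) for every admissible $|S|$, which is the desired contradiction.

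I expect the main obstacle to be exactly this last inequality. After simplification it is a one-variable polynomial inequality in $s=|S|$; I would handle it by noting that $s\mapsto\binom{n-s}{2}-\binom{n-2s-1}{2}$ is convex and checking it only at the two endpoints of the feasible range of $s$. At the endpoint $s=0$ a short parity remark finishes the case without needing $H$ connected: two odd components summing to an even $n$ cost at least $n-1$ cut edges, and three odd components summing to an odd $n$ cost at least $2n-3$, both exceeding the respective budgets. One further point that must be kept in mind throughout is that $H$ need not be connected, so the $s=0$ case is genuine; however, if $H$ had an isolated vertex $w$ then all of its edges in $G$ lie in $F$, giving $d_G(w)\le k-1$ and $|E(G)|\le\binom{n-1}{2}+k-1$, contradicting the hypothesis and keeping the case analysis tidy.
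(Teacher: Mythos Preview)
Your constructions for the lower bound coincide with the paper's: in the even case your $G_0$ is exactly the join $K_{k-1}\vee(K_{n-k}\cup K_1)$ that the paper uses, and in the odd case the paper attaches two \emph{nonadjacent} low-degree vertices with degree sum $k-1$ rather than two adjacent ones with degree sum $k$, but the edge count and the conclusion $\mpo<k$ are identical.

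For the density direction, however, the paper takes a much shorter route than your Tutte--Berge component-count. In the even case it just uses the $1$-factorization of $K_n$: since $|E(\overline G)|\le n-k-1$, at most $n-k-1$ of the $n-1$ edge-disjoint perfect matchings of $K_n$ are destroyed, so $G$ already contains $k$ edge-disjoint perfect matchings and $\mpo(G)\ge k$ follows immediately. In the odd case it observes that for any $X$ with $|X|=k-1$ the graph $G-X$ is $K_n$ minus at most $2n-4$ edges and then quotes Theorem~\ref{lem2-1} ($\mpo(K_n)=2n-3$) to conclude that an almost-perfect matching survives. Your route trades this brevity for self-containment, needing neither the $1$-factorization nor the value of $\mpo(K_n)$. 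One correction: the quadratic $s\mapsto\binom{n-s}{2}-\binom{n-2s-1}{2}$ has leading coefficient $-\tfrac32$, so it is \emph{concave}, not convex; fortunately concavity is exactly what makes the endpoint check valid for a lower bound, so your strategy is unaffected. Finally, both arguments effectively require $n\ge 9$ in the odd case---the paper's via the hypothesis of Theorem~\ref{lem2-1}, yours because at $s=(n-3)/2$ the needed inequality $\tfrac{(n+1)(n+3)}{8}\ge 2n-3$ fails for $n\in\{5,7\}$.
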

\begin{proof}
$(1)$ Let $G$ be a graph with $n$ vertices such that
$|E(G)|\geq \binom{n-1}{2}+k$. Clearly, $|E(\overline{G})|\leq
n-k-1$. Since there are $n-1$ edge-disjoint perfect matchings in
$K_n$, it follows that $G$ contains at least $(n-1)-(n-k-1)=k$
edge-disjoint perfect matchings, and hence $\mpo(G)\geq k$. So
$f(n,k)\leq \binom{n-1}{2}+k$. To show $f(n,k)\geq
\binom{n-1}{2}+k$, we construct $G_k$ as follows: Let $A_k$ be the graph with two components, $K_1$ and $K_{n-k}$, and $B_k$ be $K_{k-1}$; then $G_k$ is obtained by taking $A_k$ and $B_k$, and by adding all possible
edges between the vertices of $A_k$ and the vertices of $B_k$. (In order words, $G_k$ is the join of $A_k$ and $B_k$, that is,
$G_k=K_{k-1}\vee (K_{n-k}\cup K_1)$.)
Clearly, $G_k$ is a connected graph on $n$ vertices,
$|E(G_k)|=\binom{n-1}{2}+k-1$, and $\mpo(G_k)<k$. So
$f(n,k)=\binom{n-1}{2}+k$.

$(2)$ Let $G$ be a graph with $n$ vertices such that
$|E(G)|\geq \binom{n-2}{2}+k$. Clearly, $|E(\overline{G})|\leq
2n-k-3$. For any $X\subseteq E(G)$, $|X|=k-1$, we have
$|E(\overline{G-X})|\leq 2n-4$. Since $\mpo(K_n)=2n-3$, it follows
that $G-X$ has a perfect matching, and hence $\mpo(G)\geq k$. So
$f(n,k)\leq \binom{n-2}{2}+k$. To show $f(n,k)\geq
\binom{n-2}{2}+k$, we let $G_k$ be the graph obtained from $K_{n-2}$
by adding two vertices $u,v$ and the edges in
$E_{G_k}[u,K_{n-2}]\cup E_{G_k}[v,K_{n-2}]$ such that
$|E_{G_k}[u,K_{n-2}]|+|E_{G_k}[v,K_{n-2}]|=k-1$,
$|E_{G_k}[u,K_{n-2}]|\geq 1$, and $|E_{G_k}[v,K_{n-2}]|\geq 1$.
Clearly, $G_k$ is a connected graph on $n$ vertices,
$|E(G_k)|=\binom{n-2}{2}+k-1$, and $\mpo(G_k)<k$. So
$f(n,k)=\binom{n-1}{2}+k$.
\end{proof}

We remark that $f(n,k)$ is relatively large as one can create a Tutte/Berge set as a vertex cut that separates two complete graphs. This is not unusual as the corresponding result for Hamiltonicity has similar characteristics.

\section{Nordhaus-Gaddum-type results}
In this section, we give Nordhaus-Gaddum-type results for matching preclusion number.

\begin{thm}\label{th5-1}
Let $G\in \mathcal {G}(n)$ be a graph. For $n\geq 3$, we have

$(1)$ $$0\leq \mpo(G)+\mpo(\overline{G})\leq \begin{cases}
n-1, &\mbox {\rm if $n$ is even};\\[0.2cm]
2n-3, &\mbox {\rm if $n$ is odd}.
\end{cases}$$

$(2)$ $$0\leq \mpo(G)\cdot \mpo(\overline{G})\leq \begin{cases}
\lceil\frac{n-1}{2}\rceil \lfloor\frac{n-1}{2}\rfloor, &\mbox {\rm if $n$ is even};\\[0.2cm]
(n-2)^2, &\mbox {\rm if $n$ is odd and $n\geq 5$}.
\end{cases}$$
\end{thm}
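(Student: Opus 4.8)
The plan is to split by the parity of $n$, and within each parity to treat the sum and the product separately; the lower bounds are trivial since $\mpo(\cdot)\ge 0$. The tool used throughout is the ``isolate two vertices'' device already underlying Propositions~\ref{pro2-1} and~\ref{pro2-3}: for a graph $H$ of odd order and any two distinct vertices $x,y$, removing $E_H[x,N_H(x)]\cup E_H[y,N_H(y)]$ isolates $x$ and $y$, and a graph of odd order with two isolated vertices has neither a perfect nor an almost-perfect matching, so
$$\mpo(H)\le d_H(x)+d_H(y)-[xy\in E(H)].$$
Choosing $x$ of minimum degree with a neighbour $y$ (possible when $\delta(H)\ge1$) yields the special case $\mpo(H)\le\delta(H)+\Delta(H)-1$, which is just Proposition~\ref{pro2-1}; for graphs of even order I instead use $\mpo(H)\le\delta(H)$ from Observation~\ref{obs1-1}(2).

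For even $n$: since $\delta(\overline G)=n-1-\Delta(G)\le n-1-\delta(G)$, we get $\mpo(G)+\mpo(\overline G)\le\delta(G)+\delta(\overline G)\le n-1$, and $\mpo(G)\cdot\mpo(\overline G)\le\delta(G)\,\delta(\overline G)\le\delta(G)\bigl(n-1-\delta(G)\bigr)$; the right-hand side, as an integer-valued function of $\delta(G)$, is maximised at the most balanced split of $n-1$, giving $\lceil\frac{n-1}{2}\rceil\lfloor\frac{n-1}{2}\rfloor$. For odd $n$ the sum bound follows by applying the displayed inequality to $G$ and to $\overline G$ with the same pair $u,v$: since $d_G(u)+d_{\overline G}(u)=d_G(v)+d_{\overline G}(v)=n-1$ and $[uv\in E(G)]+[uv\in E(\overline G)]=1$, summing gives $\mpo(G)+\mpo(\overline G)\le 2(n-1)-1=2n-3$.

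The odd-order product bound is the main work. Suppose first $\delta(G)\ge1$ and $\delta(\overline G)\ge1$. Then $\mpo(G)\le\delta(G)+\Delta(G)-1$ while $\mpo(\overline G)\le\delta(\overline G)+\Delta(\overline G)-1=2n-3-\bigl(\delta(G)+\Delta(G)\bigr)$; writing $s=\delta(G)+\Delta(G)$, the product is at most $(s-1)(2n-3-s)$, which over real $s$ peaks at $s=n-1$ with value $(n-2)^2$. Otherwise (by symmetry) $\delta(G)=0$. If $G$ has two isolated vertices then $\mpo(G)=0$; otherwise let $v$ be the unique isolated vertex of $G$, so every almost-perfect matching of $G$ misses $v$, whence $\mpo(G)$ equals the least number of edges whose removal leaves $R:=G-v$ without a perfect matching and in particular $\mpo(G)\le\delta(R)=(n-2)-\Delta(\overline R)$, where $\overline R=\overline G-v$. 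On the complementary side $v$ is universal in $\overline G$: if $\overline R$ has no edge then $\overline G=K_{1,n-1}$ and $\mpo(\overline G)=0$ (here we use $n\ge5$), and if $\overline R$ has an edge $xy$ then isolating $x$ and $y$ inside $\overline G$ costs at most $2\Delta(\overline R)+1$ edges, so $\mpo(\overline G)\le 2\Delta(\overline R)+1$. With $t=\Delta(\overline R)$ the product is at most $\bigl((n-2)-t\bigr)(2t+1)$, whose maximum over real $t$ is $\frac{(2n-5)^2}{8}+(n-2)$, and an elementary check gives $\frac{(2n-5)^2}{8}+(n-2)\le(n-2)^2$ for $n\ge4$. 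Hence $\mpo(G)\cdot\mpo(\overline G)\le(n-2)^2$ in every case, with equality, e.g., for $G=K_n$ ($n\ge9$) in the sum and for $G=\overline G=C_5$ ($n=5$) in the product.

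The step I expect to be the genuine obstacle is the degenerate branch $\delta(G)=0$ of the odd-order product: the convenient estimate $\mpo(H)\le\delta(H)+\Delta(H)-1$ is no longer available, and one must instead play off the density of $G-v$ against the sparsity it forces on $\overline G$, tracked through $\Delta(\overline R)$; this is the one point where a short but real optimisation, rather than a one-line degree bound, is unavoidable. A secondary thing to keep in mind is that the ``two isolated vertices $\Rightarrow$ no almost-perfect matching'' observation must be applied to graphs of odd order, which is why the even- and odd-order arguments for the product are structurally different.
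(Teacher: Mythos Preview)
Your proof is correct. For the even case and the odd sum bound you follow the paper essentially verbatim. The treatment of the odd product bound is where you and the paper diverge.

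In the non-degenerate case ($\delta(G),\delta(\overline G)\ge1$), the paper picks a minimum-degree vertex and a neighbour in each of $G$ and $\overline G$ and then argues, via two auxiliary inequalities of the form $\delta(G)+d_{\overline G}(y)\le n-1$, that the \emph{sum} is at most $2n-4$, whence the product bound by AM--GM. Your parametrisation $s=\delta(G)+\Delta(G)$ with $\mpo(G)\le s-1$ and $\mpo(\overline G)\le 2n-3-s$ is tidier: it gives the same sum bound $2n-4$ in one line (just add), and the product bound by maximising $(s-1)(2n-3-s)$.

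In the degenerate branch ($G$ has an isolated vertex) the routes are genuinely different. The paper again aims for the sum bound $2n-4$ and gets there through a case analysis on $d_{\overline G}(x)$, where $x$ is a minimum-degree vertex of $\overline G$, treating separately the sub-case where $x$'s only $\overline G$-neighbour is the universal vertex. You instead pass to $R=G-v$ and $\overline R=\overline G-v$, bound $\mpo(G)\le\delta(R)=(n-2)-t$ and $\mpo(\overline G)\le 2t+1$ with $t=\Delta(\overline R)$, and bound the product by the one-variable optimisation $((n-2)-t)(2t+1)\le(n-2)^2$. This is shorter and avoids the case split, though it does not recover the stronger sum inequality $\mpo(G)+\mpo(\overline G)\le 2n-4$ in this branch (your two estimates add to $n-1+t$). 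Either route suffices for the theorem as stated.
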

\begin{proof}
The lower bounds are clear. So we concentrate on the upper bounds.
For $(1)$, if $n$ is even, then we let $\mpo(G)=\ell$. Then $\Delta(G)\geq \delta(G)\geq \ell$, and hence $\mpo(\overline{G})\leq \delta(\overline{G})=n-1-\Delta(G)\leq n-1-\ell$. So
$\mpo(G)+\mpo(\overline{G})\leq n-1$. Clearly, $\mpo(G)+\mpo(\overline{G})\geq 0$. For $(1)$, if $n$ is odd, then we let $u,v$ be two vertices in $G$. Without loss of generality, let $uv\in E(G)$ and $uv\notin E(\overline{G})$. Let $X=E_G[\{u,v\},V(G)-\{u,v\}]\cup \{uv\}$ and $Y=E_{\overline{G}}[\{u,v\},V(G)-\{u,v\}]$. Clearly $|X|+|Y|=2n-3$. Since $u,v$ are two isolated vertices in $G-X$, it follows that $G-X$ contains no
almost perfect matching, and hence $\mpo(G)\leq |X|$. Similarly, since $\overline{G}-Y$ contains no almost perfect matching, it follows that $\mpo(\overline{G})\leq |Y|$. So $\mpo(G)+\mpo(\overline{G})\leq |X|+|Y|=2n-3$.

For $(2)$, the upper bound follow from the upper bound on $\mpo(G)+\mpo(\overline{G})$ from $(1)$ if $n$ is even. (Maximizing $ab$ subject to $a+b=2c$, where $a$ and $b$ are variables and $c$ is a constant, gives an optimal solution at
$a=b=c$.) 
If $n$ is odd, the upper bound can be improved to the one given in the statement.
We consider two cases. We first suppose neither $G$ nor $\overline{G}$ has isolated vertices. Let $u$ be a vertex of $G$ such that $d_G(u)=\delta(G)$. Let $x$ be a vertex of $\overline{G}$ such that $d_{\overline{G}}(x)=\delta(\overline{G})$.
Let $v$ be a neighbor of $u$ and $y$ be a neighbor of $x$. (These vertices exist because $G$ and $\overline{G}$ have no isolated vertices.) Now
\[ \mpo(G)+\mpo(\overline{G})\leq (d_G(u)+d_G(v)-1)+(d_{\overline{G}}(x)+d_{\overline{G}}(y)-1)=\delta(G)+\delta(\overline{G})+d_G(v)+d_{\overline{G}}(y)-2. \]
But $\delta(G)+d_{\overline{G}}(y)\leq d_{G}(y)+d_{\overline{G}}(y)=n-1$ and
$\delta(\overline{G})+d_G(v)\leq d_{\overline{G}}(v)+d_G(v)=n-1$. Thus
$\mpo(G)+\mpo(\overline{G})\leq 2n-4=2(n-2)$. Hence $\mpo(G)\cdot\mpo(\overline{G})\leq (n-2)^2$. Now suppose $G$ has an isolated vertex $w$. Then $\overline{G}$ has no isolated vertices. If $G$ has no edges, then $mp(G)=0$ and the result is clear.
Thus we may assume that $G$ has edges and hence $\overline{G}$ is not complete.
If $G-w$ is complete, then $\overline{G}$ is $K_{1,n-1}$. Thus
$\mpo(\overline{G})=0$ since $n\geq 5$. (If $n=3$, then $\mpo(\overline{G})=2$ and $\mpo(G)=1$.) So we may assume that $G-w$ is not complete.
Consider $H=G-w$. Then $\delta(H)\leq n-3$.
$H$ has $n-1$ vertices and $n-1$ is even. So $\mpo(G)=\mpo(H)\leq \delta(H)$.
Since $\overline{G}$ has no isolated vertices, let $x$ be a vertex of $\overline{G}$ such that $d_{\overline{G}}(x)=\delta(\overline{G})$ and $y$ is a neighbor of $x$. Clearly
$x\neq w$ as $\overline{G}$ is not complete. Now $\delta(H)\leq d_H(x)=d_G(x)\leq n-2$ as $G$ has an isolated vertex. Clearly $d_{\overline{G}}(x)=\delta(\overline{G})\leq n-2$ as $\overline{G}$ is not complete.
If $d_{\overline{G}}(x)\geq 2$. Then we may choose $y\neq w$.
Now $\mpo(\overline{G})\leq d_{\overline{G}}(x)+d_{\overline{G}}(y)-1$. Thus
\[ \mpo(G)+\mpo(\overline{G})\leq \delta(H)+(d_{\overline{G}}(x)+d_{\overline{G}}(y)-1)\leq d_G(y)+d_{\overline{G}}(x)+d_{\overline{G}}(y)-1\leq d_{\overline{G}}(x)+n-2. \]
Since $d_{\overline{G}}(x)\leq n-2$,  $\mpo(G)+\mpo(\overline{G})\leq 2n-4$, and we are done, as before. 
Thus $d_{\overline{G}}(x)=1$ and $x$ is adjacent to $y=w$. Then
\[ \mpo(G)+\mpo(\overline{G})\leq \delta(H)+d_{\overline{G}}(x)+d_{\overline{G}}(y)-1\leq (n-3)+1+(n-1)-1=2n-4, \]
and we are done, as before. 
\end{proof}

We now consider the sharpness of the bounds in Theorem~\ref{th5-1}. We first consider the lower bound. If $n$ is even, we can say more with the following
result.

\begin{pro}\label{pro5-1}
Let $T$ be a tree of order $n\geq 9$. Then
$\mpo(T)=\mpo(\overline{T})=0$ if and only if $n$ is even and $T=K_{1,n-1}$.
\end{pro}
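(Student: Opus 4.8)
The plan is to prove the biconditional in Proposition~\ref{pro5-1} by analyzing when a tree $T$ and its complement $\overline{T}$ can both have matching preclusion number $0$, i.e., when both fail to have a perfect or almost-perfect matching.

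First I would handle the easy direction. If $n$ is even and $T=K_{1,n-1}$, then $T$ is a star, which clearly has no perfect matching (the $n-1$ leaves cannot be matched among themselves, and the center can only absorb one of them) and no almost-perfect matching for the same parity reason: deleting any single vertex still leaves too many leaves. Hence $\mpo(T)=0$. Its complement is $\overline{K_{1,n-1}}=K_{n-1}\cup K_1$, the disjoint union of a clique on the $n-1$ leaves and the isolated center. Since $n-1$ is odd, $K_{n-1}$ has no perfect matching and the isolated vertex obstructs any global matching, so $o(\overline{T})$ already violates Berge's condition with $S=\emptyset$: there are two odd components ($K_{n-1}$ and $K_1$), so $\mpo(\overline{T})=0$.

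For the forward direction I would argue as follows. A tree on $n$ vertices has exactly $n-1$ edges, so $\overline{T}$ has $\binom{n}{2}-(n-1)=\binom{n-1}{2}$ edges, which is large. The key point is that for a tree $T$, the complement $\overline{T}$ is extremely dense and, in fact, Hamiltonian unless $T$ is a star. One clean way: if $\delta(\overline{T})\geq n/2$, then by Dirac's theorem (Theorem~\ref{thA}) $\overline{T}$ is Hamiltonian, hence has a perfect or almost-perfect matching, forcing $\mpo(\overline{T})\geq 1$, a contradiction. Now $\delta(\overline{T})=n-1-\Delta(T)$, so the bad case is $\Delta(T)>n/2-1$, i.e., $\Delta(T)\geq n/2$ (and a bit more care at the threshold). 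So suppose $T$ has a vertex $v$ of large degree, say $d_T(v)=\Delta(T)=d$. In a tree the neighbors of $v$ span no edges among themselves in $T$, hence they form a clique in $\overline{T}$ of size $d$; combined with the remaining $n-1-d$ vertices (each of degree in $T$ at most... one needs that non-neighbors of $v$ are few), one builds a large matching in $\overline{T}$ directly unless $d=n-1$, which is precisely the star. I would make this rigorous by noting that if $d\le n-2$ then $\overline{T}$ restricted to $V\setminus\{v\}$ contains the clique on $N_T(v)$ plus some extra vertices, and a short case check (using that $T$ is a tree, so the subtree structure is limited) shows $\overline{T}-v$ — or $\overline{T}$ itself — has an almost-perfect or perfect matching. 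Simultaneously one must ensure $\mpo(T)=0$ fails once $T$ is not a star: a non-star tree either has a perfect matching (e.g., a path on even order, or any tree with a perfect matching) or at least an almost-perfect matching in the odd case — in fact every tree has at most one "deficiency" and one should invoke Tutte/Berge (Theorems~\ref{thC} and \ref{thD}) to pin down exactly which trees have neither, concluding that a tree has $\mpo=0$ iff it has more than one "odd-deficiency," which for trees forces the star on an even number of vertices.

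The main obstacle I anticipate is the forward direction's boundary analysis: ruling out all non-star trees $T$ for which one might worry that $\overline{T}$ still lacks a near-perfect matching. The danger cases are trees with a vertex of degree exactly $n-2$ or $n-3$ (a "near-star," e.g. a star with one edge subdivided, or a double star), since then $\overline{T}$ has a couple of low-degree vertices and Dirac does not directly apply. I would dispose of these by an explicit matching construction in $\overline{T}$: the $\ge n-3$ leaves of $T$ form a near-clique in $\overline{T}$, and the one or two special vertices of $T$ have nonneighbors among the leaves in $\overline{T}$ (since they are nonadjacent in $T$ to most leaves), so one can greedily match them off and then perfectly match the remaining clique (using oddness/evenness of $n$ as needed, and the fact that $n\geq 9$ gives enough room). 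The rest of the proof is routine parity bookkeeping using Theorems~\ref{thC} and~\ref{thD}.
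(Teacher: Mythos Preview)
Your proposal contains a genuine error and is considerably more complicated than necessary. The claim that ``a tree has $\mpo=0$ iff it has more than one `odd-deficiency,' which for trees forces the star on an even number of vertices'' is false: the double star on $8$ vertices (two adjacent centers, each with three pendant leaves) has no perfect matching, hence $\mpo=0$, yet it is not $K_{1,7}$. So you cannot hope to conclude $T=K_{1,n-1}$ from $\mpo(T)=0$ alone. Fortunately this line is also unnecessary: for the forward direction it suffices to show that $\mpo(\overline{T})\geq 1$ whenever $T$ is not an even star, and that is exactly what the paper does.

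Your plan for $\mpo(\overline{T})$ via Dirac plus a ``boundary analysis'' is on the right track but leaves the whole range $n/2\le \Delta(T)\le n-2$ unhandled except for the hand-waved cases $\Delta(T)\in\{n-2,n-3\}$. The paper avoids this entirely with a two-line trick. First, if $n$ is odd, then $\overline{T}$ is $K_n$ minus only $n-1$ edges, and since $\mpo(K_n)=2n-3>n-1$ (Theorem~\ref{lem2-1}), $\mpo(\overline{T})\geq 1$; so $n$ must be even. Second, if $n$ is even and $T$ is neither a path nor a star, pick any $u$ with $3\le d_T(u)\le n-2$ and any $v$ with $uv\notin E(T)$; then $\overline{T}-\{u,v\}$ is $K_{n-2}$ minus at most $(n-1)-d_T(u)-d_T(v)\le n-5$ edges, and since $\mpo(K_{n-2})=n-3$, it has a perfect matching $M$, whence $M\cup\{uv\}$ is a perfect matching of $\overline{T}$. (Even paths are dismissed separately since $\mpo(P_n)=1$ for even $n$.) This single reduction replaces your Dirac threshold split and all of the near-star casework.
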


\begin{proof}
If $n$ is even and $T=K_{1,n-1}$, then $\mpo(T)=\mpo(\overline{T})=0$. Conversely, we suppose
$\mpo(T)=\mpo(\overline{T})=0$. We claim that $n$ is even.
Assume, on the contrary, that $n$ is odd.
Since $T$ is a tree, it follows that $\overline{T}$ is a subgraph of $K_n$
by deleting $n-1$ edges. Since $\mpo(K_n)=2n-3>(n-1)$ (by Theorem~\ref{lem2-1}), $\mpo(\overline{T})\geq 1$, which is a contradiction.

We may now assume that $n$ is even. Since the matching preclusion of a path with an odd number of vertices is 2, it follows that
$T$ is not a path. Henceforth we may that $T$ is not a path. We now complete the proof by showing that
$T=K_{1,n-1}$. Assume, on the contrary, that
$T\neq K_{1,n-1}$. Then there exists a vertex $u$ such that $3\leq
d_T(u)\leq n-2$, and hence there exists a vertex $v$, such that $uv
\in E(\overline{T})$. Since $T$ is a tree, it follows that
$\overline{T}-\{u,v\}$ is a spanning subgraph of $K_{n-2}$ by deleting
at most $n-4$ edges, and hence $\overline{G}-\{u,v\}$ has a perfect
matching, say $M$. Clearly, $M\cup
\{uv\}$ is a perfect matching of $T$, and
so $\mpo(\overline{T})\geq 1$, which is a contradiction.
\end{proof}

Proposition~\ref{pro5-1} provides an example to show that the lower bounds in Theorem~\ref{th5-1} are tight for $n$ being even.
If we require both $G$ and $\overline{G}$ to be connected, then we can
consider this example for the lower bounds. This example also works if $n$ is odd.

\noindent {\bf Example 5.1.} For $n\geq 12$, let $G$ be the graph
obtained from $K_{n-5}$ by adding five new vertices
$v_1,v_2,v_3,v_4,v_5$ and then adding the edges in $\{vv_i\,|\,1\leq
i\leq 4\}\cup \{v_5v_1\}$, where $v$ is a vertex of $K_{n-5}$.
Clearly, $G$ and $\overline{G}$ are both connected. Now, 
$\mpo(G)=0$ since deleting $\{v_1,v\}$ from $G$ leaves 3 singletons in the resulting graph, and $\mpo(\overline{G})=0$ since deleting $\{v_1,v_2,v_3,v_4,v_5\}$ from $\overline{G}$ leaves $(n-5)-5\geq 2$ singletons.
(So $G$ and $\overline{G}$ have neither perfect matchings nor almost perfect matchings by Theorem~\ref{thC} and Theorem~\ref{thD}.)

One can easily classify graphs that meet the lower bound for the product.

\begin{obs}\label{obs5-1}
Let $G$ be a graph of order $n$. Then
$\mpo(G)\cdot \mpo(\overline{G})=0$ if and only if $G$ or $\overline{G}$ have no perfect matching or have no almost-perfect matching.
\end{obs}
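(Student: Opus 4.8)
The plan is to reduce the statement entirely to the defining convention for $\mpo$, reading the right-hand side as: ``$G$ has neither a perfect matching nor an almost-perfect matching, or $\overline{G}$ has neither a perfect matching nor an almost-perfect matching.'' The first step is the trivial arithmetic observation that $\mpo$ is a nonnegative-integer-valued parameter, so the product $\mpo(G)\cdot\mpo(\overline{G})$ vanishes if and only if at least one of the two factors is $0$. Thus everything comes down to pinning down exactly when $\mpo(H)=0$ for a graph $H$.

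The second step is the elementary equivalence $\mpo(H)=0$ if and only if $H$ has neither a perfect matching nor an almost-perfect matching. The ``if'' direction is literally the stipulated convention $\mpo(H)=0$ recorded after the definition. For the ``only if'' direction I would argue the contrapositive: if $H$ possesses a perfect matching or an almost-perfect matching $M$, then deleting no edges leaves $H$ itself, in which $M$ still survives, so the empty edge set is not a matching preclusion set; hence any matching preclusion set has at least one edge and $\mpo(H)\ge 1$. Combining this equivalence for $H=G$ and for $H=\overline{G}$ with the arithmetic observation of the first step gives precisely the claimed biconditional.

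I do not expect a genuine obstacle here; the only point requiring any care is that the convention ``$\mpo(H)=0$ if $H$ has no perfect and no almost-perfect matching'' must be used as a true biconditional, which is why the one-line contrapositive above is needed rather than a bare appeal to the convention. If one wished to phrase the right-hand side in a more structural way, one could additionally invoke Tutte's condition (Theorem~\ref{thC}) for the absence of a perfect matching and Berge's condition (Theorem~\ref{thD}) for the absence of an almost-perfect matching, but this is cosmetic and not required for the statement as given.
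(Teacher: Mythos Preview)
Your proposal is correct. The paper states this as an observation with no proof at all, evidently regarding it as immediate from the definition and the convention $\mpo(H)=0$; your argument is exactly the unpacking one would give, so there is nothing to compare.
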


To show the sharpness of the upper bound for the sum, we consider the following
results.

\begin{thm}\label{th5-2}
If $G$ is an odd graph of order
$n\geq 9$, then $\mpo(G)+\mpo(\overline{G})=2n-3$ if and only if $G=K_n$ or
$\overline{G}=K_n$.
\end{thm}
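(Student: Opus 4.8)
The plan is to prove Theorem~\ref{th5-2} by establishing the two directions separately, with the forward direction being the substantive one. For the easy direction, if $G = K_n$ with $n$ odd and $n \geq 9$, then $\overline{G}$ has no edges, so $\mpo(\overline{G}) = 0$ (it has no almost-perfect matching since $n \geq 3$), and $\mpo(G) = \mpo(K_n) = 2n-3$ by Theorem~\ref{lem2-1}; the sum is $2n-3$. The same holds with the roles reversed.

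For the forward direction, suppose $\mpo(G) + \mpo(\overline{G}) = 2n-3$ and neither $G$ nor $\overline{G}$ is complete; I want to derive a contradiction by showing the sum is strictly less than $2n-3$. The natural tool is Proposition~\ref{pro2-1}: for any odd graph $H$ and any edge $xy \in E(H)$, we have $\mpo(H) \leq \xi_H(xy) + 1 = d_H(x) + d_H(y) - 1$. Both $G$ and $\overline{G}$ are odd graphs on $n$ vertices. The idea is to pick vertices cleverly so that the two bounds, added together, fall below $2n-3$. As in the proof of Theorem~\ref{th5-1}(2), if both $G$ and $\overline{G}$ have no isolated vertex, choose $u$ with $d_G(u) = \delta(G)$, a neighbor $v$ of $u$ in $G$, $x$ with $d_{\overline{G}}(x) = \delta(\overline{G})$, and a neighbor $y$ of $x$ in $\overline{G}$; then
\[
\mpo(G) + \mpo(\overline{G}) \leq \bigl(d_G(u) + d_G(v) - 1\bigr) + \bigl(d_{\overline{G}}(x) + d_{\overline{G}}(y) - 1\bigr),
\]
and using $d_G(a) + d_{\overline{G}}(a) = n-1$ for every vertex $a$ together with $\delta(G) \leq d_G(y)$ and $\delta(\overline{G}) \leq d_{\overline{G}}(v)$ gives a total of at most $2n-4 < 2n-3$, a contradiction. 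So one of $G, \overline{G}$ has an isolated vertex, say $G$ has an isolated vertex $w$. Then $\mpo(G) = 0$, so we would need $\mpo(\overline{G}) = 2n-3$, which by Proposition~\ref{pro3-5} forces $\overline{G} = K_n$, contradicting our assumption (or handle it directly: $\overline{G} = K_n$ means $G$ has no edges; but $\mpo(K_n) = 2n-3$ and $\mpo(\overline{K_n}) = 0$, so this is in fact the boundary case, not a contradiction — so the conclusion $\overline{G} = K_n$ is exactly what we want).

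Let me restate the logic cleanly: assume $\mpo(G) + \mpo(\overline{G}) = 2n-3$. If both graphs have minimum degree at least $1$, the displayed argument gives $\leq 2n-4$, impossible; hence one of them, WLOG $G$, has an isolated vertex, so $\mpo(G) = 0$ and thus $\mpo(\overline{G}) = 2n-3$, which by Proposition~\ref{pro3-5} yields $\overline{G} = K_n$. The main obstacle — and the only place requiring care — is verifying that the degree-sum manipulation genuinely produces $2n-4$ and not $2n-3$; this hinges on the $-1$ terms coming from the edge degrees (i.e., that we are using Proposition~\ref{pro2-1} on actual edges $uv$ and $xy$), which is exactly why the "no isolated vertex" hypothesis is needed to guarantee the neighbors $v$ and $y$ exist. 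One should also double-check the edge case where $v = x$ or $y = u$ etc., but since we only use the degree identities $d_G(a) + d_{\overline{G}}(a) = n-1$ and the minimality of $\delta$, no disjointness of $\{u,v,x,y\}$ is actually required, so the argument goes through verbatim as in the proof of Theorem~\ref{th5-1}(2).
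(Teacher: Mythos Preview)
Your Case~1 argument (both $G$ and $\overline{G}$ have minimum degree at least~$1$) is correct and essentially the same computation as in the paper's proof of Theorem~\ref{th5-1}(2). The gap is in Case~2: the implication ``$G$ has an isolated vertex, so $\mpo(G)=0$'' is \emph{false} for odd graphs. That implication holds for even graphs (an isolated vertex cannot be covered, so there is no perfect matching), but for an odd graph an almost-perfect matching is allowed to miss the isolated vertex. Concretely, take $n$ odd, $n\geq 9$, and let $G=K_{n-1}\cup K_1$; the isolated vertex $w$ is simply the vertex missed by every almost-perfect matching, and $\mpo(G)=\mpo(K_{n-1})=n-2\neq 0$. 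So from ``$G$ has an isolated vertex'' you cannot conclude $\mpo(\overline{G})=2n-3$ and then invoke Proposition~\ref{pro3-5}.

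The paper sidesteps the case split entirely. Assuming $G\neq K_n$ and $\overline{G}\neq K_n$, one observes that there must exist a single vertex $u$ with $1\leq d_G(u)\leq n-2$ (if every vertex had degree $0$ or $n-1$, the two classes could not coexist, forcing $G$ empty or complete). One then takes $w\in N_G(u)$ and $v\notin N_G(u)\cup\{u\}$, arranges by symmetry that $d_G(w)\leq d_G(v)$, and applies Proposition~\ref{pro2-1} to the edge $uw\in E(G)$ and the edge $uv\in E(\overline{G})$ with the \emph{same} vertex $u$ in both. This gives
\[
\mpo(G)+\mpo(\overline{G})\leq \bigl(d_G(u)+d_G(w)-1\bigr)+\bigl(d_{\overline{G}}(u)+d_{\overline{G}}(v)-1\bigr)
=(n-1)+d_G(w)+d_{\overline{G}}(v)-2\leq 2n-4,
\]
since $d_G(w)+d_{\overline{G}}(v)\leq d_G(v)+d_{\overline{G}}(v)=n-1$. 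Reusing $u$ is exactly what makes the isolated-vertex case disappear: the condition $1\leq d_G(u)\leq n-2$ simultaneously guarantees the needed edge in $G$ and the needed edge in $\overline{G}$, so no separate minimum-degree hypotheses on the two graphs are required.
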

\begin{proof}
If $G=K_n$ or $\overline{G}=K_n$, then it follows from Proposition \ref{pro3-5} that $\mpo(G)+\mpo(\overline{G})=2n-3$.
Conversely, we suppose $\mpo(G)+\mpo(\overline{G})=2n-3$, $G\neq K_n$ and $\overline{G}\neq K_n$. Then there is a vertex in $G$, say $u$, with
$1\leq d_G(u)\leq n-2$. So we have $u,w,v\in V(G)$ such that $uw\in E(G)$ and $uv\notin E(G)$. We may assume that $d_G(w)\leq d_G(v)$;
otherwise, interchange the roles of $G$ and $\overline{G}$. Now $\mpo(G)+\mpo(\overline{G})\leq
d_G(u)+d_G(w)-1+d_{\overline{G}}(u)+d_{\overline{G}}(v)-1\leq d_G(u)+d_G(v)-1+d_{\overline{G}}(u)+d_{\overline{G}}(v)-1=2n-4<2n-3$, which
is a contradiction.
\end{proof}

If $G$ is even, $K_n$ is still an example to show sharpness of the upper bound for the sum. However, there are other examples such as a cycle of length $4$. Nevertheless, there are some restrictions as the next result shows.

\begin{thm}\label{th5-3}
Let $G$ be an even graph of
order $n\geq 2$. If $\mpo(G)+\mpo(\overline{G})=n-1$, then $G$ is regular.
\end{thm}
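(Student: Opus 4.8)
The plan is to derive regularity from two elementary inequalities, with no case analysis needed. First I would observe that since $n$ is even, both $G$ and $\overline{G}$ are even graphs, so Observation~\ref{obs1-1}(2) applies to each of them: $\mpo(G)\le \delta(G)$ and $\mpo(\overline{G})\le \delta(\overline{G})$. Next I would record the standard degree-complement identity $\delta(\overline{G})=n-1-\Delta(G)$, which holds because a vertex of maximum degree in $G$ is precisely a vertex of minimum degree in $\overline{G}$. Combining these gives
\[
\mpo(G)+\mpo(\overline{G})\ \le\ \delta(G)+\delta(\overline{G})\ =\ \delta(G)+\bigl(n-1-\Delta(G)\bigr)\ =\ (n-1)-\bigl(\Delta(G)-\delta(G)\bigr).
\]

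Now I would feed in the hypothesis $\mpo(G)+\mpo(\overline{G})=n-1$. The displayed inequality then forces $\Delta(G)-\delta(G)\le 0$; since trivially $\delta(G)\le \Delta(G)$, we conclude $\delta(G)=\Delta(G)$, i.e.\ $G$ is regular. Note that we do not even need $\mpo$ to equal $\delta$ anywhere — only the one-sided bound $\mpo\le\delta$ for even graphs is used, and the conclusion drops out immediately.

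There is essentially no obstacle here; the only points deserving a sentence are the observation that $\overline{G}$ inherits the parity of $G$ (so Observation~\ref{obs1-1}(2) is legitimately applicable to it) and the degree-complement identity. The boundary case $n=2$ is harmless, since $K_2$ and $2K_1$ are both regular and are in any event handled by the general argument. I would also remark that regularity is necessary but not sufficient — a $4$-cycle attains $\mpo(G)+\mpo(\overline{G})=n-1$ while a regular graph such as a $6$-cycle does not — which is what motivates the continued discussion following the theorem; but this sharpness remark is separate from the implication being proved.
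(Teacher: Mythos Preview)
Your proof is correct and essentially identical to the paper's: both use $\mpo\le\delta$ for even graphs together with the identity $\delta(\overline{G})=n-1-\Delta(G)$, the only cosmetic difference being that the paper phrases it as a contradiction while you argue directly. One small correction to your closing side remark: $C_6$ actually \emph{does} attain the bound, since $\mpo(C_6)=2$ and $\overline{C_6}$ is the triangular prism with $\mpo(\overline{C_6})=3$, giving $2+3=5=n-1$.
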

\begin{proof}
Suppose $\mpo(G)+\mpo(\overline{G})=n-1$. Assume, on the contrary, that
$G$ is not regular. Then $\Delta(G)>\delta(G)$, and hence there
exist two vertices $u,v$ in $G$ such that $d_G(u)=\Delta(G)$ and
$d_G(v)=\delta(G)$. Since $\mpo(G)\leq d_G(v)$ and
$\mpo(\overline{G})\leq d_{\overline{G}}(u)$, it follows that
$\mpo(G)+\mpo(\overline{G})\leq
d_G(v)+d_{\overline{G}}(u)<d_G(u)+d_{\overline{G}}(u)=n-1$, which is a
contradiction.
\end{proof}


One may wonder where every value between the lower bound and the upper bound of $\mpo(G)+\mpo(\overline{G})$ is achievable, that is, whether an intermediate value theorem exists. We now show that it is true if
$n$ is even. We start with the following observation.

\begin{obs}\label{obs5-2}
If $m$ is odd, then edges of $K_m$ can be decomposed into $m$ edge-disjoint almost perfect matchings where each vertex is missed in exactly one of them.
\end{obs}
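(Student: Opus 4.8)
The plan is to exhibit an explicit cyclic (rotational) decomposition. Identify $V(K_m)$ with the cyclic group $\mathbb{Z}_m=\{0,1,\dots,m-1\}$, and for each $i\in\mathbb{Z}_m$ set
$$
M_i=\bigl\{\,\{j,k\}\ :\ j,k\in\mathbb{Z}_m,\ j\neq k,\ j+k\equiv 2i \pmod m\,\bigr\}.
$$
The first step is to check that each $M_i$ is an almost-perfect matching of $K_m$ missing exactly the vertex $i$. Because $m$ is odd, $2$ is invertible in $\mathbb{Z}_m$; hence for every vertex $j$ the congruence $j+k\equiv 2i$ has the unique solution $k=2i-j$. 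One has $k=j$ precisely when $2j\equiv 2i$, i.e.\ when $j=i$, so vertex $i$ has no partner in $M_i$, while every other vertex $j$ is matched to the unique vertex $2i-j\neq j$. Thus $M_i$ is a set of $(m-1)/2$ independent edges covering $\mathbb{Z}_m\setminus\{i\}$, that is, an almost-perfect matching of $K_m$ whose unique unsaturated vertex is $i$.

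The second step is to verify that the $M_i$ partition $E(K_m)$. Given any edge $\{j,k\}$ with $j\neq k$, it lies in $M_i$ if and only if $2i\equiv j+k\pmod m$, i.e.\ $i\equiv 2^{-1}(j+k)\pmod m$; since $2$ is a unit, there is exactly one such $i$. Therefore each of the $\binom{m}{2}$ edges of $K_m$ belongs to precisely one $M_i$, so $E(K_m)=\bigcup_{i\in\mathbb{Z}_m}M_i$ is a decomposition into $m$ edge-disjoint almost-perfect matchings. Finally, since $M_i$ misses only the vertex $i$ and the index set is all of $\mathbb{Z}_m$, every vertex is missed by exactly one matching in the family, which is the asserted property.

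The argument is elementary; the only point that needs care is the invertibility of $2$ modulo $m$, which is precisely where the hypothesis that $m$ is odd is used. It is needed both to guarantee that each $M_i$ is a genuine matching (a unique, distinct partner for every $j\neq i$) and to guarantee that the edges are distributed one per class among the $m$ sets $M_i$. I do not anticipate any substantial obstacle beyond writing out these two verifications.
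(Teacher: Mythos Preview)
Your construction is correct: the cyclic decomposition $M_i=\{\{j,k\}:j+k\equiv 2i\pmod m\}$ is exactly the standard near-$1$-factorization of $K_m$ for odd $m$, and your two verifications (each $M_i$ is an almost-perfect matching missing vertex $i$; every edge lies in a unique $M_i$) are complete and accurate.

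The paper itself offers no proof of this observation---it is simply stated as a known fact and then used in the proof of Theorem~\ref{th5-6}. Your argument supplies the natural justification the paper omits; there is nothing to compare against, but what you have written is the textbook proof and would fit seamlessly.
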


\begin{thm}\label{th5-6}
Let $n\geq 4$ be an even number. Then there exists $G$ such that $\mpo(G)+\mpo(\overline{G})=r$ for every $r$ in $[0,n-1]$. Moreover it is realizable with $\mpo(G)=r$ and $\mpo(\overline{G})=0$ for every $r$ in $[0,n-1]$.
\end{thm}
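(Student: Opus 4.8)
The plan is to exhibit, for each $r$ with $0\le r\le n-1$, a single explicit graph $G_r$ on $n$ vertices with $\mpo(G_r)=r$ and $\mpo(\overline{G_r})=0$; this proves the ``moreover'' clause and hence, a fortiori, the first assertion. The construction: take a complete graph $K_{n-1}$ on a vertex set $W=\{w_1,\dots,w_{n-1}\}$, add one new vertex $v$, and join $v$ to exactly the vertices $w_1,\dots,w_r$.

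First I would dispose of the complement. For $r\ge 1$ the vertex $w_1$ is adjacent in $G_r$ to every other vertex (to $W\setminus\{w_1\}$ inside $K_{n-1}$, and to $v$ since $r\ge 1$), so $w_1$ is isolated in $\overline{G_r}$; as $n$ is even this forces $\overline{G_r}$ to have no perfect matching, i.e.\ $\mpo(\overline{G_r})=0$. For $r=0$ one has $\overline{G_0}=K_{1,n-1}$, a star on $n\ge 4$ vertices, which again has no perfect matching, so $\mpo(\overline{G_0})=0$.

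Next, the value of $\mpo(G_r)$. The upper bound $\mpo(G_r)\le r$ is immediate: deleting the $r$ edges $vw_1,\dots,vw_r$ isolates $v$, and a graph of even order with an isolated vertex has neither a perfect nor an almost-perfect matching. For the lower bound $\mpo(G_r)\ge r$ I would use Observation~\ref{obs5-2}: since $n-1$ is odd, $E(K_{n-1})$ (on $W$) decomposes into $n-1$ pairwise edge-disjoint almost-perfect matchings $N_1,\dots,N_{n-1}$ with $N_j$ missing exactly $w_j$. For $1\le j\le r$ put $P_j=N_j\cup\{vw_j\}$; each $P_j$ is a perfect matching of $G_r$ (it covers all of $W$ except $w_j$ via $N_j$, then covers $v$ and $w_j$ via $vw_j$, and $vw_j\in E(G_r)$ because $j\le r$). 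The $P_j$ are pairwise edge-disjoint: the $N_j$ are edge-disjoint, the edges $vw_j$ are distinct, and no $vw_j$ lies in any $N_i$. Hence for any $X\subseteq E(G_r)$ with $|X|\le r-1$, each edge of $X$ lies in at most one $P_j$, so some $P_j$ avoids $X$ entirely and survives as a perfect matching of $G_r-X$. Therefore $\mpo(G_r)\ge r$, and combining the two bounds, $\mpo(G_r)=r$. Then $\mpo(G_r)+\mpo(\overline{G_r})=r+0=r$, as required.

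The only genuinely non-routine step is finding the construction together with the observation that the round-robin decomposition of $K_{n-1}$ supplies $r$ edge-disjoint perfect matchings of $G_r$; once Observation~\ref{obs5-2} is in hand, everything else is a short verification. One should just be mildly careful about the degenerate endpoints $r=0$ and $r=n-1$, where $G_r$ is respectively $K_{n-1}\cup K_1$ and $K_n$.
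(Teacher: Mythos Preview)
Your proof is correct and rests on the same key ingredient as the paper's, namely Observation~\ref{obs5-2}, used to manufacture $r$ pairwise edge-disjoint perfect matchings and thereby certify $\mpo(G)\ge r$, together with the trivial upper bound $\mpo(G)\le\delta(G)=r$ and an isolated vertex in the complement forcing $\mpo(\overline{G})=0$. The one difference is the specific graph you build: you take the \emph{full} $K_{n-1}$ on $W$ and attach a new vertex $v$ of degree $r$, whereas the paper takes only the union $H$ of $r$ almost-perfect matchings on $W$ and attaches $v$ with \emph{full} degree $n-1$. These are in a sense dual: in your graph the minimum degree is realized at $v$ and the isolated vertex of the complement is $w_1$, while in the paper's graph the minimum degree is realized inside $W$ and the isolated vertex of the complement is $v$. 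Both constructions contain the same $r$ edge-disjoint perfect matchings $N_j\cup\{vw_j\}$, so the arguments are interchangeable; your version has the minor expository advantage that the host graph $K_{n-1}\cup\{v\}$ is easier to name and that the upper bound $\mpo(G_r)\le r$ is witnessed very concretely by the $r$ edges at $v$.
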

\begin{proof}
Let $n$ be even. Let $r$ be an integer in $[1,n-1]$. Take $r$ edge-disjoint almost perfect matchings from $K_{n-1}$ and consider the graph $H$ induced by them. Add a vertex $v$ that is adjacent to every $n-1$ vertices in $H$. Call the resulting graph $G$.  Then $v$ has degree $n-1$ in $G$. The other vertices has degree $r+1$ or $r$ in $G$. (They have degree $r$ or $r-1$ in $H$.) By construction, $G$ contains $r$ edge-disjoint perfect matchings. So $\mpo(G)=r$. Note that $\overline{G}$ has an isolated vertex, so $\mpo(\overline{G})=0$. This together with the sharpness of the bounds give the result.
\end{proof}

For $n$ is odd, we were not able to obtain an intermediate value theorem although we conjecture it to be true.
We now consider the sharpness of the upper bound for the product. Suppose $n$ is even. Now $K_n$ can be partitioned into $n-1$ edge-disjoint perfect matching $M_1,M_2,\ldots,M_{n-1}$. Let $G_1$ be
the graph induced by the first $\lceil\frac{n-1}{2}\rceil$ $M_i'$s and $G_2$ be the graph induced by the other $\lfloor\frac{n-1}{2}\rfloor$ $M_i'$s. Then $G_2=\overline{G_1}$. Clearly
$\mpo(G_1)=\lceil\frac{n-1}{2}\rceil$ and $\mpo(G_2)=\lfloor\frac{n-1}{2}\rfloor$. The next result shows that such examples must be regular graphs.

\begin{thm}\label{th5-5}
Let $G$ be an even graph of
order $n\geq 2$. If $\mpo(G)\cdot \mpo(\overline{G})=\lceil\frac{n-1}{2}\rceil \lfloor\frac{n-1}{2}\rfloor$, then $G$ is $\lceil\frac{n-1}{2}\rceil$-regular or $\lfloor\frac{n-1}{2}\rfloor$-regular.
\end{thm}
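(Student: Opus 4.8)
The plan is to first pin down the exact values of $\mpo(G)$ and $\mpo(\overline{G})$, and then convert the extremal product into degree constraints via Observation~\ref{obs1-1}(2).

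First I would use Theorem~\ref{th5-1}(1), which gives $\mpo(G)+\mpo(\overline{G})\le n-1$ for even $n$. Write $a=\mpo(G)$, $b=\mpo(\overline{G})$ and $s=a+b\le n-1$. For a fixed sum $s$, the product $a(s-a)$ of two nonnegative integers is maximized exactly when $\{a,b\}=\{\lfloor s/2\rfloor,\lceil s/2\rceil\}$, with value $\lfloor s/2\rfloor\lceil s/2\rceil$; moreover $s\mapsto\lfloor s/2\rfloor\lceil s/2\rceil$ is nondecreasing and is strictly increasing for $s\ge 1$. Hence, for $n\ge 4$, the hypothesis $ab=\lceil\frac{n-1}{2}\rceil\lfloor\frac{n-1}{2}\rfloor>0$ forces first $s=n-1$ and then $\{a,b\}=\{\lceil\frac{n-1}{2}\rceil,\lfloor\frac{n-1}{2}\rfloor\}$. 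The degenerate case $n=2$ (where the product is $0$) is disposed of directly: then $\mpo(G)=0$ or $\mpo(\overline{G})=0$, so $G=\overline{K_2}$ or $G=K_2$, which is $0$- or $1$-regular, as required.

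Since $n$ is even, $\lceil\frac{n-1}{2}\rceil=\frac n2$ and $\lfloor\frac{n-1}{2}\rfloor=\frac n2-1$, and by interchanging the roles of $G$ and $\overline{G}$ if necessary we may assume $\mpo(G)=\frac n2$ and $\mpo(\overline{G})=\frac n2-1$. Applying Observation~\ref{obs1-1}(2) to the even graph $G$ gives $\delta(G)\ge\mpo(G)=\frac n2$; applying it to $\overline{G}$ gives $\delta(\overline{G})\ge\mpo(\overline{G})=\frac n2-1$, and since $\delta(\overline{G})=n-1-\Delta(G)$ this yields $\Delta(G)\le n-1-(\frac n2-1)=\frac n2$. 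Combining, $\frac n2\le\delta(G)\le\Delta(G)\le\frac n2$, so $G$ is $\frac n2$-regular, i.e. $\lceil\frac{n-1}{2}\rceil$-regular; in the symmetric case $G$ is $(\frac n2-1)$-regular, i.e. $\lfloor\frac{n-1}{2}\rfloor$-regular.

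The argument is short, and the only slightly delicate point is the first step: one must check that the extremal value of the product simultaneously forces the exact split of $\mpo(G)$ and $\mpo(\overline{G})$ and forces their sum to be exactly $n-1$ (ruling out a strictly smaller sum whose maximal product happens to coincide), together with the degenerate behavior at $n=2$. Once that is settled, the passage to $\delta(G)$ and $\Delta(G)$ through Observation~\ref{obs1-1}(2) and the identity $\delta(\overline{G})=n-1-\Delta(G)$ is immediate.
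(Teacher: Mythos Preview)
Your proof is correct and follows essentially the same approach as the paper's: use the sum bound from Theorem~\ref{th5-1}(1) together with the AM--GM-type observation to pin down $\{\mpo(G),\mpo(\overline{G})\}$, then translate via $\mpo\le\delta$ and $\delta(\overline{G})=n-1-\Delta(G)$ into the regularity conclusion. You are in fact more careful than the paper in justifying that the extremal product forces both the sum to equal $n-1$ and the split to be $\{\lceil(n-1)/2\rceil,\lfloor(n-1)/2\rfloor\}$, and in handling the degenerate case $n=2$ separately.
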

\begin{proof}
Since $\mpo(G)+\mpo(\overline{G})\leq n-1$, If $\mpo(G)\cdot \mpo(\overline{G})=\lceil\frac{n-1}{2}\rceil \lfloor\frac{n-1}{2}\rfloor$, we have $\mpo(G)=\lceil\frac{n-1}{2}\rceil$ and $\mpo(\overline{G})=\lfloor\frac{n-1}{2}\rfloor$ or $\mpo(\overline{G})=\lceil\frac{n-1}{2}\rceil$ and $\mpo(G)=\lfloor\frac{n-1}{2}\rfloor$. Without loss of generality, we consider only $\mpo(G)=\lceil\frac{n-1}{2}\rceil$ and $\mpo(\overline{G})=\lfloor\frac{n-1}{2}\rfloor$. As $\mpo(G)\leq \delta (G)$, we have $\delta (G) \geq \lceil\frac{n-1}{2}\rceil$ and $\delta (\overline{G}) \geq \lfloor\frac{n-1}{2}\rfloor$. Then $G$ is $\lceil\frac{n-1}{2}\rceil$-regular. The other situation can be obtained by the same method.
\end{proof}

If $n$ is odd, then if we let $G=C_5$, the 5-cycle, then $\overline{G}$ is also $C_5$. Thus $\mpo(G)\cdot \mpo(\overline{G})=3^2=(5-2)^2$. However, we do not know examples for infinitely many $n$ to truly show 
that $(n-2)^2$
is tight.

\section{Conclusion}
In this paper, we studied some extremal type problems for the matching preclusion problem. Understanding these types of problems who help researchers in designing reliable and resilient interconnection networks.

\end{document}